\documentclass[10pt,reqno]{amsart}
\usepackage{bbm}
\usepackage{mathrsfs}
\usepackage{amsfonts} 
\usepackage[dvipsnames,usenames]{color}
\textwidth=13.5cm 
\baselineskip=17pt 
\usepackage{graphicx,latexsym,bm,amsmath,amssymb,verbatim,multicol,lscape}
\usepackage{pgfplots}

\vfuzz2pt 
\hfuzz2pt 
\newtheorem{thm}{Theorem} [section]

\newtheorem{lem}[thm]{Lemma}

\theoremstyle{definition}
\newtheorem{defn}[thm]{Definition}
\theoremstyle{remark}
\newtheorem{rem}[thm]{Remark}
\numberwithin{equation}{section}

\newcommand{\abs}[1]{\left\vert#1\right\vert}
\newcommand{\set}[1]{\left\{#1\right\}}

\newcommand{\Z}{\mathbb{Z}}
\newcommand{\F}{\mathbb{F}}

\newcommand{\Q}{\mathbb{Q}}

\newcommand{\E}{\mathcal{E}}
\newcommand{\N}{\mathcal{N}}

\newcommand{\OO} {\mathcal O}
\newcommand{\p} {\mathfrak p}
\newcommand{\ha} {\mathfrak a}
\newcommand{\hb} {\mathfrak b}
\newcommand{\ang}[1]{\left\langle#1\right\rangle}
\begin{document}
\title{Algebraic properties of summation of exponential Taylor polynomials}
\begin{abstract}
Let $n\ge 1$ be an integer and $e_n(x)$ denote the truncated
exponential Taylor polynomial, i.e.  $e_{n}(x)=\sum_{i=0}^n\frac{x^i}{i!}$.
A well-known theorem of Schur states that the Galois group of
$e_n(x)$ over $\Q$ is the alternating group $A_n$ if $n$ is
divisible by 4 or the symmetric group $S_n$ otherwise.
In this paper, we study algebraic properties of the summation of
two truncated exponential Taylor polynomials $\E_n(x):=e_n(x)+e_{n-1}(x)$.
We show that $\frac{x^n}{n!}+\sum_{i=0}^{n-1}c_i\frac{x^i}{i!}$
with all $c_i \ (0\le i\le n-1)$ being integers
is irreducible over $\Q$ if either $c_0=\pm 1$, or $n$ is not a positive
power of $2$ but $|c_0|$ is a positive power of 2.
This extends another theorem of Schur. We show also that $\E_n(x)$
is irreducible if $n\not\in\{2,4\}$. Furthermore, we show that
${\rm Gal}_{\Q}(\E_n)$ contains $A_{n}$ except
for $n=4$, in which case, ${\rm Gal}_{\Q}(\E_4)=S_3$. Finally, we show that
the Galois group ${\rm Gal}_{\Q}(\E_n)$ is $S_n$ if $n\equiv 3 \pmod 4$, or
if $n$ is even and $v_p(n!)$ is odd for a prime divisor of $n-1$, or if
$n\equiv 1\pmod 4$ and $n-2$ equals the product of an odd prime number
$p$ which is coprime to $\sum_{i=1}^{p-1}2^{p-1-i}i!$ and a positive integer
coprime to $p$.
\end{abstract}

\author[L.F. Ao]{Lingfeng Ao}
\address{Mathematical College, Sichuan University, Chengdu 610064, P.R. China}
\email{alf0038@126.com}
\author[S.F. Hong]{Shaofang Hong$^*$}
\address{Mathematical College, Sichuan University, Chengdu 610064, P.R. China}
\email{sfhong@scu.edu.cn; s-f.hong@tom.com; hongsf02@yahoo.com}
\thanks{$^*$S.F. Hong is the corresponding author and was supported
partially by National Science Foundation of China Grant \#11771304.}
\keywords{Newton polygon, irreduciblity Criterion, Galois group,
symmetric group, alternating group.}
\subjclass[2000]{Primary 11R09, 11R32, 11C08}
\maketitle
\section{Introduction}
Let $n\ge 1$ be an integer and $F$ be an arbitrary field. Let $f(x)\in F[x]$ be a polynomial of degree $n$. We denote by ${\rm spl}_{F}f$ the splitting field of $f(x)$ over $F$. The Galois group of $f(x)$ over $F$ always means the Galois group of the field extension ${\rm spl}_{F}f/F$, which is denoted by ${\rm Gal}_{F}(f)$.
Let $e_n(x)$ stand for the $n$-th truncated exponential Taylor polynomials, i.e. $e_n(x):=\sum_{i=0}^{n}{x^i}/{i!}$. These polynomials are irreducible over the field $\Q$ of rational numbers, which was first proved by Schur\cite{Schur1}. Actually, he proved a more general result stating that any polynomial of the form
$$1+a_1x+a_2\frac{x^2}{2!}+\cdots+a_{n-1}\frac{x^{n-1}}{(n-1)!}\pm\frac{x^n}{n!}$$
are irreducible over $\Q$, where $a_i\in \Z$ for all positive integers $i$ with
$\Z$ being the set of all integers.  Coleman\cite{Coleman} reproved the irreducibility
of $e_n(x)$ by the $p$-adic Newton polygon method. Coleman and Schur showed that  ${\rm Gal}_{\Q}(e_n)$ is $S_n$ (full symmetric group) if $n\not \equiv 0 \pmod 4$ or $A_n$ (alternating group) otherwise.

The truncated exponential Taylor polynomials are special cases of
generalized Laguerre polynomials (GLP) which is a one-parameter family defined by
$$L_n^{(\alpha)}(x)=(-1)^n\sum\limits_{j=0}^{n}\binom{n+\alpha}{n-j}\frac{(-x)^j}{j!}.$$
There are some authors who studied the algebraic properties for certain rational values of $\alpha$. Schur \cite{Schur2} claimed the irreducibility of $L_n^{(\alpha)}(x)$ over $\Q$ and computed ${\rm Gal}_{\Q}(L_n^{(\alpha)})$ for $\alpha=0,1$ and $-n-1$. Hajir \cite{Hajir1} did the same for $\alpha=-n-2$.
Notice that $L_n^{(-n-1)}(x)=e_{n}(x)$ and $L_n^{(-n-2)}(x)=\sum_{j=0}^{n}e_{j}(x)$. For an account of results on GLP, we refer the readers to Hajir \cite{Hajir3}, Sell \cite{Sell} and Filaseta, Kidd and Trifonov \cite{Filaseta}.

It is natural to ask what is the Galois group for other kind of polynomials. In this paper, we mainly consider algebraic properties of the family of summation of exponential Taylor polynomials. Let $\E_{n}(x)$ denote the summation of $e_{n}(x)$ and $e_{n-1}(x)$, i.e.
$$\E_{n}(x):=e_n(x)+e_{n-1}(x)=\frac{x^n}{n!}+2\sum_{i=0}^{n-1}\frac{x^i}{i!}.$$
We are interested in the irreducibility of $\E_n(x)$ and the Galois group ${\rm Gal}_{\Q}(\E_n)$. First of all, we prove a generalization of Schur's
theorem.

\begin{thm}\label{thm1}
Let
\begin{align}\label{irr}
f(x)=\sum\limits_{i=0}^{n}c_i\frac{x^i}{i!}\in\Q[x]
\end{align}
be a polynomial of degree $n\ge 2$, where $c_i\in \Z$
for all integers $i$ with $1\le i\le n-1$, $c_n=1$
and $c_0=\pm 2^{k}$ with $k$ being a nonnegative integer.
Then each of the following is true:

{\rm (i).} Either $f(x)$ is irreducible over $\Q$, or
$f(x)=g(x)\prod_{i=1}^{n-\deg g(x)}(x\pm 2^{t_i})$ with
$g(x)$ being irreducible and $\frac{n}{2}<\deg g(x)<n$
and all $t_i \ (1\le i\le n-\deg g(x))$ being nonnegative integers.

{\rm (ii).} If $n$ is not a positive power of $2$
or $c_0=\pm 1$, then $f(x)$ is irreducible over $\Q$.
\end{thm}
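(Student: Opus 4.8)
The plan is to clear denominators and then argue through $p$-adic Newton polygons. Put
\[
F(x)=n!\,f(x)=x^{n}+\sum_{i=0}^{n-1}c_i\frac{n!}{i!}\,x^{i}\in\Z[x],
\]
a monic polynomial over $\Z$. By Gauss's lemma $f$ is irreducible over $\Q$ exactly when $F$ admits no factorization into two monic integer polynomials of positive degree, so it suffices to study such factorizations of $F$. I will use two standard facts about the $p$-adic Newton polygon: the multiset of its slopes is additive under multiplication, and if an edge has slope $-1/m$ and horizontal length $m$, then within any integer factor the length carried by that slope is a multiple of $m$; hence such an edge lies entirely in one factor, which then has degree at least $m$.

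First I would determine the possible linear factors. Since $F$ is monic over $\Z$, every rational root is an integer $r$, and I claim $r=\pm2^{t}$. Let $q$ be an odd prime and suppose $q\mid r$. Comparing $q$-adic valuations of the terms of $\sum_{i=0}^{n}c_i\frac{n!}{i!}r^{i}=0$, the term $i=0$ has valuation $v_q(n!)$ because $v_q(c_0)=0$, whereas for $i\ge1$ the valuation is at least $v_q(n!)-v_q(i!)+i>v_q(n!)$, using $v_q(i!)<i$. Thus the constant term is the unique term of least valuation, so the left-hand side has finite valuation $v_q(n!)$, contradicting that it vanishes. Hence no odd prime divides $r$, that is $r=\pm2^{t}$, which accounts for the linear factors $x\pm2^{t_i}$ in (i).

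For the global shape in (i), pick by Bertrand's postulate a prime $P$ with $n/2<P\le n$; then $v_P(n!)=1$, so the $P$-adic Newton polygon of $F$ joins $(0,1)$ to $(n,0)$ and, since $v_P(n!/i!)=1$ for $i<P$, it is one edge of slope $-1/i_1$ with $i_1\ge P>n/2$ followed by a horizontal edge. By the denominator principle this steep edge lies in one factor $g$, so there is a unique factor with $\deg g\ge i_1>n/2$. It remains to exclude an irreducible factor of degree $d$ with $2\le d\le n/2$. Here I would use Sylvester's theorem to select a prime $p>d$ dividing $\frac{n!}{(n-d)!}=(n-d+1)\cdots n$; then $v_p(n!/i!)\ge1$ for all $i\le n-d$, so at most $d-1$ roots are $p$-adic units, while the denominators of the positive slopes block a degree-$d$ factor from collecting the remaining ramified roots. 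The key simplification is that every such $p$ is odd, since it exceeds $d\ge2$, and at each odd prime $v_p(c_0)=0$, so the Newton polygon of $F$ coincides with that of a member of Schur's family; thus the exclusion of degree-$d$ factors is exactly the one in Coleman's and Schur's treatment of the truncated exponential and transfers verbatim. Combined with the classification of linear factors, this yields the factorization type in (i). I expect the decisive difficulty to lie precisely here: the block-counting is transparent when a prime lies in the short interval $(n-d,n]$, but small $d$ force $v_p(n!)>1$ and demand the finer combinatorics of the exponential's Newton polygon, so carefully transferring Coleman's analysis to arbitrary $c_1,\dots,c_{n-1}$ and verifying that only odd primes are ever needed is the technically delicate core.

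Statement (ii) then reduces to showing $f$ has no linear factor under either hypothesis, since by (i) the absence of linear factors forces $f=g$ with $g$ irreducible. By the above a linear factor must be $x\pm2^{t}$. If $n$ has an odd prime divisor $q$, then $v_q(n!/i!)\ge1$ for every $i\le n-1$ while the leading coefficient is $1$, so for $r=\pm2^{t}$ we get $F(r)\equiv r^{n}=\pm2^{tn}\not\equiv0\pmod q$, and no such root exists. If instead $c_0=\pm1$, the same congruence with any prime divisor of $n$ excludes $r=\pm1$, and a $2$-adic comparison, now with $v_2(c_0)=0$, excludes $r=\pm2^{t}$ for $t\ge1$, because for $t\ge1$ the constant term becomes the unique term of least $2$-adic valuation. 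In both cases $f$ is irreducible.
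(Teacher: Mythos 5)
Your classification of the possible linear factors (an elementary valuation comparison showing any rational root is $\pm 2^t$), your Bertrand-plus-denominator-principle argument that some irreducible factor must have degree greater than $n/2$ (valid once $n\ge 3$, so that the prime $P$ can be chosen odd and $v_P(c_0)=0$ actually holds), and your deduction of part (ii) from part (i) are all correct; in fact your treatment of the case $c_0=\pm 1$ by a direct $2$-adic comparison at the candidate roots $\pm 2^{t}$ is more elementary than the paper's computation of the full $2$-adic Newton polygon of $F$.

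However, the exclusion of irreducible factors of degree $d$ with $2\le d\le n/2$ --- which is the heart of the theorem, and which you yourself flag as ``the technically delicate core'' --- is never actually proved. After choosing the Sylvester prime $p>d$ dividing $\frac{n!}{(n-d)!}$ and correctly observing that at most $d-1$ roots of $F$ are $p$-adic units, you assert that ``the denominators of the positive slopes block a degree-$d$ factor from collecting the remaining ramified roots.'' Nothing you have written bounds those denominators: a priori a negative-slope edge of $NP_p(F)$ could have slope $-1$ (denominator $1$), and then nothing at all is blocked. The missing quantitative fact is this: since $p$ is odd, $v_p(c_0)=0$, so the point over $i=0$ sits at height exactly $v_p(n!)$, while the point over $i\ge 1$ sits at height at least $v_p(n!)-v_p(i!)$; since $v_p(i!)=\frac{i-s_p(i)}{p-1}<\frac{i}{p-1}$, every slope of $NP_p(F)$ lies in $\left(-\frac{1}{p-1},\,0\right]$. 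Consequently every root of positive valuation has valuation in $\left(0,\frac{1}{p-1}\right)$, hence valuation denominator (equivalently, ramification index of $\Q_p(\alpha)/\Q_p$) at least $p$, so the factor containing it has degree at least $p>d$; combined with your unit-root count this gives the contradiction. This estimate is precisely the paper's ``claim,'' proved there in the form $(p-1)d_{\mathfrak p}<e_{\mathfrak p}\le m$ via a prime ideal $\mathfrak p$ above $p$ in $\Q(\alpha)$. Your proposal to import this step ``verbatim'' from Coleman or Schur does not work as a citation: Coleman's Newton-polygon analysis is for $e_n$ itself, where the coefficients $n!/i!$ are known exactly, whereas here only the one-sided bounds $v_p(c_i)\ge 0$ are available; and Schur's argument for his family \emph{is} the ramification estimate above. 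In short, the step you defer to the literature is exactly the step that must be written out, and without it the proof of (i) --- and hence of (ii), which relies on it --- is incomplete.
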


Using Theorem \ref{thm1} and a result of Hajir \cite{Hajir1},
we can deduce the following result about the irreducibility
and ${\rm Gal}_{\Q}(\E_n)$.

\begin{thm}\label{thm2}
Let $n\ge 1$ be an integer. Then each of the following is true:

{\rm (i).} The polynomial $\E_{n}(x)$ is irreducible
over $\Q$ except for $n\in\{2, 4\}$.

{\rm (ii).} ${\rm Gal}_{\Q}(\E_n)$ contains $A_{n}$ except
for $n=4$, in which case, ${\rm Gal}_{\Q}(\E_4)=S_3$.
\end{thm}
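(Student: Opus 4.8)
The plan is to derive both parts of Theorem~\ref{thm2} from Theorem~\ref{thm1} together with a Newton polygon analysis. Observe first that $\E_n(x)=\frac{x^n}{n!}+2\sum_{i=0}^{n-1}\frac{x^i}{i!}$ is exactly of the form treated in Theorem~\ref{thm1}, with $c_n=1$ and $c_i=2$ for $0\le i\le n-1$, so $c_0=2=\pm 2^{k}$ with $k=1$. Thus if $n$ is not a positive power of $2$, part (ii) of Theorem~\ref{thm1} immediately gives that $\E_n(x)$ is irreducible (for $n=1$ this is trivial since $\E_1(x)=x+2$). It therefore remains, for part (i), to treat the case $n=2^m$, where Theorem~\ref{thm1}(i) leaves open only the possibility $\E_n(x)=g(x)\prod_{i}(x\pm 2^{t_i})$ with at least one linear factor $x\pm 2^{t_i}$. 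Such a factor would force $\E_n$ to have a rational root of the form $\mp 2^{t}$, i.e. $\pm$ a power of $2$; so the whole task reduces to showing that, for $n=2^m\ge 8$, the polynomial $\E_n$ has no root $\pm 2^{t}$.

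Ruling out these roots splits into three ranges of $t$. Positive candidates are excluded at once, since every coefficient of $\E_n$ is positive and so $\E_n(x)>0$ for $x\ge 0$. For the negative candidates write $M:=n!\,\E_n(-2^{t})=2^{tn}+\sum_{i=0}^{n-1}(-1)^i2^{ti+1}\frac{n!}{i!}$ and compute $2$-adic valuations using $v_2(n!)=n-1$ and $v_2(i!)=i-s_2(i)$, where $s_2$ denotes the binary digit sum: the $i$-th lower term has valuation $n+(t-1)i+s_2(i)$ while the leading term has valuation $tn$. For $t=0$ the leading term has valuation $0$ and all others valuation $\ge 1$, so $M$ is odd and nonzero; for $t\ge 2$ the term $i=0$ is the unique term of minimal valuation $n$, so again $M\ne 0$. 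The delicate case is $t=1$ (the root $-2$), where the $i=0$ and leading terms share the minimal valuation $n$ and cancel, leaving the $2$-adic estimate inconclusive; here I instead use the real-analytic bound $\E_n(-2)=2e^{-2}-2R+\frac{2^n}{n!}$ with $|R|\le\frac{2^n}{n!}$ by the alternating series estimate, which gives $\E_n(-2)\ge 2e^{-2}-3\frac{2^n}{n!}>0$ for $n\ge 8$. This shows $-2$ is not a root for $n=2^m\ge 8$, completing the exclusion of power-of-$2$ roots and hence the irreducibility in part (i). The genuine exceptions are recovered from the explicit factorizations $\E_2(x)=\frac12(x+2)^2$ and $\E_4(x)=\frac{1}{24}(x+2)(x^3+6x^2+12x+24)$.

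For part (ii), assume $n\notin\{2,4\}$, so $\E_n$ is irreducible and $G:=\mathrm{Gal}_\Q(\E_n)$ acts transitively on the $n$ roots. The key observation is that for every odd prime $p$ one has $v_p\big(2\tfrac{n!}{i!}\big)=v_p\big(\tfrac{n!}{i!}\big)$, so the $p$-adic Newton polygon of $\E_n$ coincides with that of $e_n$; in particular, for a prime $p$ with $n/2<p\le n$ the polygon has a single segment of slope $-1/p$ over $[0,p]$ and is horizontal over $[p,n]$, exactly as in the Schur--Coleman analysis of $e_n$. This segment forces a totally ramified factor of degree $p$ over $\Q_p$, whose inertia yields an element of order $p$ fixing the $n-p$ unramified roots, hence a $p$-cycle in $G$. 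Choosing, via Bertrand's postulate, such a prime $p$ with $n/2<p\le n-3$ (possible and odd for all sufficiently large $n$), the $p$-cycle together with transitivity makes $G$ primitive, and Jordan's theorem on primitive groups containing a prime-length cycle yields $A_n\subseteq G$. This is precisely the criterion of Hajir~\cite{Hajir1}, which I invoke to package the Newton-polygon-to-cycle step and the accompanying group theory; the finitely many small $n$ for which no suitable prime lies in $(n/2,n-3]$ are checked directly.

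Finally, for $n=4$ the splitting field of $\E_4$ equals that of its irreducible cubic factor $x^3+6x^2+12x+24$; the substitution $x=y-2$ converts this into $y^3+16$, whose splitting field is $\Q(\sqrt[3]{2},\zeta_3)$, so $\mathrm{Gal}_\Q(\E_4)=S_3$. The main obstacle is the $t=1$ case of part (i): the clean $2$-adic argument breaks down exactly at the root $-2$ (which reflects the true exceptions $n=2,4$), so it must be supplemented by the analytic estimate above, with a uniform check that the sign switches precisely at $n=8$. A secondary technical point is guaranteeing, uniformly in $n$, an odd prime in the range required by Jordan's theorem, together with the direct treatment of the remaining small degrees.
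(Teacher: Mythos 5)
Your proposal follows, in substance, the same route as the paper: Theorem~\ref{thm1} reduces part (i) to excluding roots of the form $\pm 2^t$ when $n$ is a power of $2$, and part (ii) for $n\ge 8$ rests on the Newton polygon at a prime $q$ with $n/2<q<n-2$ (whose existence is Lemma~\ref{CHEBY}) combined with Hajir's criterion (Lemma~\ref{HAJIR}; note this is the result of \cite{Hajir}, not \cite{Hajir1}) --- your inertia-plus-Jordan discussion is exactly the content of that criterion, which you then invoke anyway. Within this common framework you make two genuinely different, and correct, choices. First, your term-by-term $2$-adic valuation analysis of $n!\,\E_n(-2^t)$ (a unique minimal term for $t=0$ and for $t\ge 2$, inconclusive cancellation at $t=1$) is the paper's Newton-polygon statement that all roots have $2$-adic valuation $1$, done by hand; for the remaining candidate root $-2$ you use the alternating-series bound $\E_n(-2)\ge 2e^{-2}-3\cdot\frac{2^n}{n!}>0$ for $n\ge 8$, where the paper instead shows that $\E_n(-2)$ increases along even $n$ (Lemma~\ref{emtwo}). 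Second, for $n=4$ your substitution $x=y-2$, which turns the cubic factor into $y^3+16$ with splitting field $\Q(\sqrt[3]{2},\zeta_3)$, is a cleaner derivation of ${\rm Gal}_{\Q}(\E_4)=S_3$ than the paper's computation ${\rm Disc}_{\Q}(\E_4')=-2^8 3^3$. Both variants are valid.

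The genuine deficiency is the clause ``the finitely many small $n$ for which no suitable prime lies in $(n/2,n-3]$ are checked directly.'' Those cases are precisely $n\in\{3,5,6,7\}$, and while $n=3$ is immediate (an irreducible cubic has Galois group containing $A_3$), for $n=5,6,7$ the verification that ${\rm Gal}_{\Q}(\E_n)\supseteq A_n$ cannot be waved at: it constitutes roughly half of the paper's proof of part (ii). The paper first computes ${\rm Disc}_{\Q}(n!\E_n)$ via Lemma~\ref{DISCR} (needed so that the auxiliary primes can be certified coprime to the discriminant), then factors $n!\E_n$ modulo $17$ and $61$ (for $n=5$), modulo $47$ and $13$ (for $n=6$), and modulo $13$ and $61$ (for $n=7$), and finally invokes Lemmas~\ref{Subgroup}, \ref{Subgroups} and \ref{Symmetric} to exhibit a transposition and an $(n-1)$-cycle, hence all of $S_n$. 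Your plan would certainly succeed once these finite but nontrivial computations are supplied; as written, the proposal omits them, so it is incomplete exactly where the paper has to do its most concrete work.
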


To determine ${\rm Gal}_{\Q}(\E_n)$ is $A_n$ or $S_n$, one
needs to determine whether the discriminant of $n!\E_n(x)$
is a square of a rational number or not. To do this, we
obtain a formula for the discriminant of $n!\E_n(x)$ at first.
Then we use it to show that ${\rm Gal}_{\Q}(\E_n)$ is $S_n$ in some
special cases. Namely, one has the following result.

\begin{thm}\label{thm4}
The Galois group ${\rm Gal}_{\Q}(\E_n)$ equals $S_n$
in each of the following cases:

{\rm (i).} $n\equiv 3 \pmod 4$.

{\rm (ii).} $n$ is even, and $v_p(n!)$ is odd for a prime
divisor of $n-1$.

{\rm (iii).} $n\equiv 1\pmod 4$ and $n-2$ equals the product
of an odd prime number $p$ and a positive integer which is
coprime to $p$ with $p$ satisfying that
\begin{align}\label{cond}
\sum\limits_{i=1}^{p-1}2^{p-1-i}i!\not \equiv 0\pmod p.
\end{align}
\end{thm}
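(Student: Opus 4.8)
\emph{Setup and reduction.} The plan is to reduce the determination of $\Gal_{\Q}(\E_n)$ to a squareness question for the discriminant, and then settle that question by a sign consideration in case (i) and by a $p$-adic valuation count in cases (ii) and (iii). Since $\E_n$ is irreducible and $\Gal_{\Q}(\E_n)\supseteq A_n$ for $n\neq 4$ by Theorem \ref{thm2}, the standard criterion gives $\Gal_{\Q}(\E_n)=S_n$ precisely when the discriminant is not a square in $\Q$. Writing $G_n(x):=n!\,\E_n(x)=x^n+2\,n!\,e_{n-1}(x)\in\Z[x]$ (monic), the polynomials $G_n$ and $\E_n$ have the same roots, so $\Disc(\E_n)=\Disc(G_n)/(n!)^{2n-2}$ and it suffices to decide whether the integer $\Disc(G_n)$ is a perfect square.

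\emph{The discriminant formula.} From $e_n'=e_{n-1}$ one gets $\E_n'=\E_{n-1}$, hence $G_n'=nG_{n-1}$ and the exact relation $G_n(x)=nG_{n-1}(x)+x^{n-1}(x+n)$. Evaluating $G_n$ at a root $\beta$ of $G_{n-1}=\tfrac1n G_n'$ kills the first term, so $\Res(G_n,G_n')=n^n\Res(G_{n-1},G_n)=n^n\prod_{\beta}\beta^{n-1}(\beta+n)$. Using $\prod_\beta\beta=(-1)^{n-1}G_{n-1}(0)=(-1)^{n-1}2(n-1)!$ and $\prod_\beta(\beta+n)=(-1)^{n-1}G_{n-1}(-n)$, the signs collapse and I obtain
\begin{align*}
\Disc(G_n)=(-1)^{n(n-1)/2}\,n^n\,2^{n-1}\,[(n-1)!]^{n-1}\,G_{n-1}(-n).
\end{align*}
Everything now hinges on the sign and the prime factorization of the single integer $G_{n-1}(-n)=2(n-1)!\,e_{n-1}(-n)-(-n)^{n-1}$.

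\emph{Case (i).} Here $n\equiv 3\pmod 4$ forces $(-1)^{n(n-1)/2}=-1$, while $n^n,2^{n-1},[(n-1)!]^{n-1}$ are positive; thus $\Disc(G_n)<0$, hence not a square, as soon as $G_{n-1}(-n)>0$. Using the integral remainder $e_{n-1}(-n)=e^{-n}+\tfrac{1}{(n-1)!}\int_0^n(n-s)^{n-1}e^{-s}\,ds$ (with $n$ odd), positivity reduces to the sharp bound $\int_0^n u^{n-2}e^u\,du<\tfrac{n^{n-1}e^n}{2(n-1)}$, which I would prove from the strict concavity of $u\mapsto(n-2)\log u+u$: its graph lies strictly below the tangent line at $u=n$, so the integrand is dominated by the exponential $n^{n-2}e^n\,e^{\frac{2(n-1)}{n}(u-n)}$, whose integral over $(-\infty,n]$ is exactly the stated threshold. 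This analytic estimate is the main obstacle, since the inequality is genuinely tight.

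\emph{Cases (ii) and (iii).} Here I instead exhibit an odd $p$-adic valuation of $\Disc(G_n)$. In both cases the relevant odd prime $p$ is coprime to $n$, so $v_p(n^n)=v_p(2^{n-1})=0$ and the valuation comes from $v_p([(n-1)!]^{n-1})=(n-1)v_p((n-1)!)$ together with $v_p(G_{n-1}(-n))$. In case (ii), $p\mid n-1$ makes $(n-1)!/i!$ divisible by $p$ for every $i\le n-2$, so $G_{n-1}(-n)=(-n)^{n-1}+p\cdot(\text{integer})$ is a $p$-adic unit; since $n-1$ is odd and $v_p((n-1)!)=v_p(n!)$ is odd by hypothesis, $v_p(\Disc(G_n))=(n-1)v_p(n!)$ is odd. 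In case (iii), $n-1$ is even so $(n-1)v_p((n-1)!)$ is even and the oddness must come from $G_{n-1}(-n)$; writing $n-2=pm$ with $p\nmid m$, a first reduction shows the two unit terms combine to $(-n)^{n-2}(n-2)\equiv 0\pmod p$, and a careful expansion modulo $p^2$ — in which only the indices $n-2-p\le i\le n-2$ contribute — yields
\begin{align*}
G_{n-1}(-n)\equiv pm\,(-2)^{n-2}\Big(1-\sum_{r=1}^{p}(r-1)!\,2^{1-r}\Big)\pmod{p^2}.
\end{align*}
Clearing the power of $2$ via Fermat's little theorem identifies the bracket, up to a $p$-adic unit, with $\sum_{i=1}^{p-1}2^{p-1-i}i!$, so condition \eqref{cond} is exactly what forces $v_p(G_{n-1}(-n))=1$, whence $v_p(\Disc(G_n))$ is odd. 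The delicate point in (iii) is the bookkeeping of which indices survive modulo $p^2$ and matching the surviving sum to \eqref{cond}; once that is done, each case concludes that $\Disc(G_n)$ is not a square, so $\Gal_{\Q}(\E_n)=S_n$.
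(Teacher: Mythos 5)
Your proposal is correct, and everything except case (i) coincides with the paper's proof up to bookkeeping. Your resultant recursion reproduces Lemma \ref{DISCR} exactly: since $\E_{n-1}(-n)=\E_n(-n)$ (the two top-degree terms cancel at $x=-n$), one has $n^n[(n-1)!]^{n-1}G_{n-1}(-n)=(n!)^n\E_n(-n)$, so your formula and the paper's are the same. In (ii), your observation that $G_{n-1}(-n)$ is a $p$-adic unit is the paper's ultrametric computation $v_p(\E_n(-n))=-v_p((n-1)!)$ in integral clothing; in (iii), your mod $p^2$ expansion is the paper's computation of $n!\E_n(-n)/p$ modulo $p$, and your bracket matches (\ref{cond}) because $1-\sum_{r=1}^{p}(r-1)!\,2^{1-r}=-\sum_{k=1}^{p-1}k!\,2^{-k}\equiv-\sum_{k=1}^{p-1}2^{p-1-k}k!\pmod p$ by Fermat. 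The genuine divergence is case (i). The paper stays arithmetic: it picks a prime $p\equiv3\pmod4$ dividing $n-4$ and shows $-n!\E_n(-n)\equiv-2^{2n-4}\pmod p$ is a quadratic non-residue. You instead show the discriminant is \emph{negative}, which hinges on the analytic inequality $\E_n(-n)>0$ for odd $n$; that inequality does hold by your tangent-line argument, since strict concavity of $u\mapsto(n-2)\log u+u$ gives $u^{n-2}e^u<n^{n-2}e^{n}e^{2(n-1)(u-n)/n}$ on $(0,n)$, the majorant integrates over $(-\infty,n]$ to exactly $\frac{n^{n-1}e^n}{2(n-1)}$, and positivity of $G_{n-1}(-n)=2(n-1)!\,e_{n-1}(-n)-n^{n-1}$ needs only this bound (indeed only the weaker one with an extra $+(n-2)!$ on the right coming from the $e^{-n}$ term of the Taylor remainder), so your ``main obstacle'' is not an obstacle. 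Your route buys the stronger statement $\Disc_{\Q}(n!\E_n)<0$ for every $n\equiv3\pmod4$ and avoids hunting for an auxiliary prime, at the price of a tight analytic estimate; the paper's route is purely congruential. One shared caveat: both proofs invoke $A_n\subseteq{\rm Gal}_{\Q}(\E_n)$, valid only for $n\ne4$, yet $n=4$ formally satisfies hypothesis (ii) (as $v_3(4!)=1$ is odd) while ${\rm Gal}_{\Q}(\E_4)=S_3$; so (ii) must be read as excluding $n=4$, a flaw in the statement that your argument inherits from the paper rather than introduces.
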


\begin{rem}
We point out that there are infinitely many positive integers
$n$ satisfying the condition presented in Theorem {\ref{thm4}} (iii)
for which one has ${\rm Gal}_{\Q}(\E_n)=S_n$. For instance,
taking $p=3$ gives that $\sum_{i=1}^{2}2^{3-1-i}i! \equiv 1\pmod3$
which satisfies the condition (\ref{cond}). By Chinese
reminder theorem, one finds that any positive integer
$n$ with $n\equiv 5\pmod {36}$ or $n\equiv 17 \pmod {36}$
satisfies the condition in part (iii) of Theorem \ref{thm4}.
Thus ${\rm Gal}_{\Q}(\E_n)=S_n$ if $n\equiv 5\pmod {36}$
or $n\equiv 17 \pmod {36}$.
\end{rem}

This paper is organised as follows. We give some definitions
and preliminary lemmas in the second section. Then we prove
Theorem {\ref{thm1}} in the third section. The fourth section
is devoted to the proof of Theorem {\ref{thm2}}.
In the last section, we present the proof of Theorem {\ref{thm4}}.
\section{Preliminary lemmas}
In this section, we present some definitions and preliminary
lemmas. As usual, we use $p$ to denote a prime number.
We denote by  $s_p(n)$ and $\left(\frac{\cdot}{p}\right)$
the digital summation of the $p$-adic expansion of $n$ and
the Legendre symbol with respect to $p$. Given a prime number
$p$, $\F_p$ denotes the finite field of $p$ elements, i.e. $\F_p=\Z/p\Z=\set{0,1,...,p-1}$. For a given polynomial
$h(x)\in \Z[x]$ and a prime number $p$, we use $\ang{h(x)}_p$
to denote $h(x)$ modulo $p$, i.e.
$\ang{h(x)}_p\equiv h(x)\pmod p$.

\begin{defn}
The {\it $p$-adic valuation} of an integer $m$ with respect to
$p$, denoted by $v_p(m)$, is defined as
$$
v_p(m)=\left\{
\begin{array}{ll}{\max\{k\ge 0: p^k\mid m\}}
& { ~\text{if}~~ m\ne 0,} \\ {\infty} & { ~\text{if}~~ m=0.}
\end{array}\right.
$$
\end{defn}
This definition could be naturally extended to the rational field
$\Q$, any algebraic extension of $\Q$ and the local field $\Q_p$.
To prove Theorem \ref{thm1}, we need the following lemma due to
Sylvester which is a generalization of Bertrand's postulate.

\begin{lem} {\rm \cite{Sylvester}} \label{Erdos}
For any positive integer $k\le l$, at least one of
the numbers in the list
$$l+1,l+2,...,l+k$$
is divisible by a prime number which is greater than $k$.
\end{lem}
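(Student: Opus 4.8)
Since the assertion is exactly Sylvester's classical theorem, one option is simply to invoke \cite{Sylvester}; below I sketch the elementary proof I would reproduce. The plan is first to reformulate the statement in terms of a binomial coefficient. Writing $n=l+k$, one has $(l+1)(l+2)\cdots(l+k)=k!\binom{n}{k}$, and since no prime exceeding $k$ can divide $k!$, a prime $>k$ divides the product $(l+1)\cdots(l+k)$ if and only if it divides $\binom{n}{k}$; moreover a prime divides the product exactly when it divides one of the consecutive factors. As $l\ge k$ forces $n=l+k\ge 2k$, it therefore suffices to prove the Sylvester--Schur statement: if $n\ge 2k$, then $\binom{n}{k}$ has a prime divisor greater than $k$.

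I would argue by contradiction, assuming every prime dividing $\binom{n}{k}$ is at most $k$. The two basic inputs are valuation estimates coming from Legendre's formula $v_p\binom{n}{k}=\sum_{i\ge 1}\bigl(\lfloor n/p^i\rfloor-\lfloor k/p^i\rfloor-\lfloor (n-k)/p^i\rfloor\bigr)$. Each summand lies in $\{0,1\}$ and vanishes as soon as $p^i>n$, so $p^{v_p\binom{n}{k}}\le n$ for every prime $p$, and in particular $v_p\binom{n}{k}\le 1$ once $p>\sqrt n$. Splitting the factorization as $\binom{n}{k}=\prod_{p\le\sqrt n}p^{v_p}\cdot\prod_{\sqrt n<p\le k}p^{v_p}$, the first product is controlled prime by prime via $p^{v_p}\le n$, while the second is bounded using the primorial estimate $\prod_{p\le x}p<4^{x}$.

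Confronting these upper bounds with the elementary lower bound $\binom{n}{k}\ge (n/k)^{k}$ should pin $n$ into a bounded range and yield the contradiction. The main obstacle, and the reason the theorem is genuinely delicate, is that the crude estimate ``each small prime contributes at most a factor $n$'' is far too lossy when $\sqrt n$ already contains $k$ primes (roughly $n\gtrsim p_k^{2}$): there the product $n^{\pi(\sqrt n)}$ overwhelms the lower bound and closes nothing. To repair this one must bound the joint contribution of the small primes to a product of $k$ consecutive integers far more economically---for instance by factoring each integer $n-i$ as a $\sqrt n$-smooth part times at most one prime in $(\sqrt n,k]$ and counting how the large primes can repeat---which is exactly Erd\H{o}s's careful bookkeeping. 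Carrying out that accounting so that the estimates close across the whole range $2k\le n$ is where the real work lies.
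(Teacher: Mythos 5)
Your fallback of simply invoking \cite{Sylvester} is exactly what the paper does: its entire ``proof'' of this lemma is the remark that the result was proved by Sylvester \cite{Sylvester} and reproved by Schur \cite{Schur1} and Erd\H{o}s \cite{Erdos}, with no argument given. Your elementary sketch is a fair outline of the Erd\H{o}s approach (binomial reformulation, $p^{v_p\binom{n}{k}}\le n$, the primorial bound, the lower bound $\binom{n}{k}\ge(n/k)^k$), and you correctly identify---and candidly concede---that these crude bounds do not close in all ranges and that the real work of Erd\H{o}s's bookkeeping is omitted; since the citation is the operative proof here, as in the paper, this incompleteness is immaterial.
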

\begin{proof}
This was proved by Sylvester \cite{Sylvester}, and reproved
by Schur \cite{Schur1} and Erdos \cite{Erdos}.
\end{proof}

We require Legendre theorem for calculating the
$p$-adic valuation for the factorial $n!$.

\begin{lem}\label{Legendre}\textup{\cite{Koblitz}}
For every positive integer $n$ and prime $p$, we have
$$v_p(n!)=\frac{n-s_p(n)}{p-1}.$$
\end{lem}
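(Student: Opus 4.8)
The plan is to prove Legendre's formula $v_p(n!) = \frac{n - s_p(n)}{p-1}$ by first establishing the standard counting identity $v_p(n!) = \sum_{j\ge 1}\lfloor n/p^j\rfloor$ and then summing this geometric-style series in terms of the base-$p$ digits of $n$. First I would observe that in the product $n! = 1\cdot 2\cdots n$, the exact power of $p$ dividing $n!$ is obtained by counting, for each $j\ge 1$, how many of the integers $1,\dots,n$ are divisible by $p^j$; each multiple of $p^j$ that is \emph{not} a multiple of $p^{j+1}$ contributes exactly $j$ to $v_p(n!)$, but it is cleaner to count with multiplicity: a multiple of $p^j$ contributes $1$ to the tally at every level $1,2,\dots,j$. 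Since the number of multiples of $p^j$ in $\{1,\dots,n\}$ is precisely $\lfloor n/p^j\rfloor$, summing over all levels gives
\begin{align}\label{eq:legendre-floor}
v_p(n!)=\sum_{j=1}^{\infty}\left\lfloor \frac{n}{p^j}\right\rfloor,
\end{align}
where the sum is finite because $\lfloor n/p^j\rfloor=0$ once $p^j>n$.

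The second step is to evaluate the right-hand side of \eqref{eq:legendre-floor} using the $p$-adic expansion. Writing $n=\sum_{i=0}^{m}a_i p^i$ with $0\le a_i\le p-1$ and $a_m\ne 0$, so that $s_p(n)=\sum_{i=0}^{m}a_i$, I would use the fact that $\lfloor n/p^j\rfloor=\sum_{i\ge j}a_i p^{i-j}$ for each $j\ge 1$, since dividing by $p^j$ and discarding the fractional part simply drops the bottom $j$ digits. Substituting this into \eqref{eq:legendre-floor} and exchanging the order of summation, the digit $a_i$ appears in the levels $j=1,2,\dots,i$, so its total contribution is $a_i(1+p+\cdots+p^{i-1})=a_i\frac{p^i-1}{p-1}$. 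Summing over $i$ yields
\begin{align}\label{eq:legendre-collect}
v_p(n!)=\sum_{i=0}^{m}a_i\,\frac{p^i-1}{p-1}=\frac{1}{p-1}\left(\sum_{i=0}^{m}a_i p^i-\sum_{i=0}^{m}a_i\right)=\frac{n-s_p(n)}{p-1},
\end{align}
which is the desired identity.

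The one point requiring a little care, and the closest thing to an obstacle, is justifying the interchange of the two summations and the reindexing that turns the double sum over $(i,j)$ into the single sum over $i$ weighted by $\frac{p^i-1}{p-1}$; since every term is a nonnegative integer and only finitely many are nonzero (all indices are bounded by $m$), this rearrangement is entirely legitimate and causes no convergence issues. Everything else is elementary, so I expect the proof to be short: the only genuine content is the counting argument behind \eqref{eq:legendre-floor} and the clean digit bookkeeping in \eqref{eq:legendre-collect}. An equally valid alternative, which I would mention as a remark, is to prove the formula by induction on $n$ using $v_p(n!)=v_p((n-1)!)+v_p(n)$ together with the observation that $s_p(n)-s_p(n-1)=1-(p-1)\,v_p(n)$, but the direct digit computation above is the cleanest route.
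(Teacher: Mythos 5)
Your proof is correct. Note, however, that the paper does not prove this lemma at all: it is stated as a known result with a citation to Koblitz's book, so there is no internal argument to compare against. What you have written is the standard derivation found in that reference and elsewhere: first the counting identity $v_p(n!)=\sum_{j\ge 1}\lfloor n/p^j\rfloor$ (each multiple of $p^j$ among $1,\dots,n$ tallied once at every level up to its exact $p$-adic valuation), then the digit bookkeeping $\lfloor n/p^j\rfloor=\sum_{i\ge j}a_ip^{i-j}$ followed by swapping the finite double sum to collect $a_i\frac{p^i-1}{p-1}$ for each digit. Both steps are sound, the interchange of summation is trivially justified since the double sum is finite with nonnegative terms, and your concluding identity $\frac{1}{p-1}\bigl(\sum_i a_ip^i-\sum_i a_i\bigr)=\frac{n-s_p(n)}{p-1}$ is exactly the claimed formula; the inductive alternative you sketch at the end would also work.
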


\begin{defn}
The {\it $p$-adic Newton polygon} $NP_{p}(f)$ of a polynomial $f(x)=\sum\limits_{j=0}^{n}c_jx^{j}\in\Q[x]$ is the lower convex hull of the set of points
$$S_p(f)=\{{(j,v_p(c_j))\mid 0\leq j \leq n}\}.$$
The vertices $(x_0,y_0),(x_1,y_1),\ldots,(x_r,y_r)$, the points where the slope of the Newton polygon changes are called the {\it corners} of $NP_{p}(f)$; their $x$-coordinates $(0=x_0<x_1<...<x_r=n)$ are the {\it breaks} of $NP_{p}(f)$; the lines connecting two vertices are called the {\it segments} of $NP_{p}(f)$.
\end{defn}

\noindent Evidently, the $p$-adic Newton polygon $NP_{p}(f)$ is the highest polygonal
line passing on or below the points in $S_p(f)$.

\begin{lem}\label{MTNP} \textup{\cite{Hajir3}} {\textup{(Main theorem on $p$-adic Newton polygon).}}
~Let $(x_0,y_0),(x_1,y_1),...,(x_r,y_r)$ denote the successive vertices of
$p$-adic Newton polygon $NP_p(f)$ of $f(x)$. Then there exist polynomials
$f_1,...,f_r$ in $\Q_{p}[x]$ such that each of the following is true:

{\rm (i).}{\it~$f(x)=f_1(x)f_2(x)\cdots f_r(x)$.}

{\rm (ii).}{\it The degree of $f_i$ is $x_i-x_{i-1}$.}

{\rm (iii).} {\it All the roots of $f_i$ in $\overline{\Q_{p}}$
have $p$-adic valuation $-\frac{y_i-y_{i-1}}{x_i-x_{i-1}}$.}
\end{lem}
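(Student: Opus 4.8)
The plan is to prove the factorization by passing to the roots of $f$ in $\overline{\Qp}$ and reading off the segment data of $NP_p(f)$ directly from their $p$-adic valuations. First I would fix notation: write $f(x)=c_n\prod_{j=1}^{n}(x-\alpha_j)$ with $\alpha_j\in\overline{\Qp}$, and recall that $v_p$ extends uniquely from $\Qp$ to every finite extension (hence to $\overline{\Qp}$), because $\Qp$ is complete; concretely $v_p(\beta)=\frac{1}{[\Qp(\beta):\Qp]}\,v_p\!\left(N_{\Qp(\beta)/\Qp}(\beta)\right)$. Then I would order the roots so that $\lambda_1\le\lambda_2\le\cdots\le\lambda_n$, where $\lambda_j:=v_p(\alpha_j)$, and set $V_k:=\lambda_1+\cdots+\lambda_k$ with $V_0:=0$.

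The key computation relates the coefficients to symmetric functions of the roots. Since $c_{n-k}=c_n(-1)^k e_k(\alpha_1,\dots,\alpha_n)$, where $e_k$ is the $k$-th elementary symmetric polynomial, the ultrametric inequality yields
$$v_p(c_{n-k})=v_p(c_n)+v_p(e_k)\ge v_p(c_n)+V_k,$$
the bound $v_p(e_k)\ge V_k$ holding because each monomial of $e_k$ is a product of $k$ distinct roots, whose smallest possible valuation is $V_k$, attained by $\alpha_1\cdots\alpha_{k}$. Next I would show that at each index $k_0$ where the valuations strictly jump, i.e.\ $\lambda_{k_0}<\lambda_{k_0+1}$, the monomial $\alpha_1\cdots\alpha_{k_0}$ is the \emph{unique} term of minimal valuation, so the inequality becomes an equality $v_p(e_{k_0})=V_{k_0}$. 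Because the increments $\lambda_k$ are nondecreasing, $k\mapsto V_k$ is convex, and (after the reflection $k=n-j$, which preserves convexity) its vertices occur exactly at these jump indices. Hence the convex polygon through the points $(n-k,\,v_p(c_n)+V_k)$ lies on or below all the actual points $(n-k,\,v_p(c_{n-k}))$ and touches them at its vertices, so it \emph{is} the lower convex hull, i.e.\ $NP_p(f)$. Reading off the geometry, the successive segments have slopes $-\lambda$ for the distinct root-valuations $\lambda$, and the horizontal length $x_i-x_{i-1}$ of the $i$-th segment equals the number of roots $\alpha_j$ whose valuation is the negative of that slope, namely $-\frac{y_i-y_{i-1}}{x_i-x_{i-1}}$. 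This already secures (ii) and (iii) for the factors to be constructed.

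It remains to realize the grouping of roots by valuation as an honest factorization over $\Qp$, and this is where the main obstacle lies. Rather than multiplying out $\prod(x-\alpha_j)$ over $\overline{\Qp}$ — which need not give $\Qp$-rational coefficients and is complicated by possible inseparability — I would factor $f$ into monic irreducibles over $\Qp$ and observe that all roots of a single irreducible factor are $\Gal(\overline{\Qp}/\Qp)$-conjugate, hence share one common valuation. The decisive point is that $v_p$ is $\Gal(\overline{\Qp}/\Qp)$-invariant: for any $\sigma$ in this group, $v_p\circ\sigma$ is again an extension of the valuation on $\Qp$, so by the uniqueness recalled above $v_p\circ\sigma=v_p$. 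One may therefore group the $\Qp$-irreducible factors according to the common valuation $\lambda_i$ of their roots, let $f_i$ be the product of those attached to $\lambda_i$ (absorbing $c_n$ into $f_1$), and conclude $f_i\in\Qp[x]$ since it is a product of members of $\Qp[x]$. By the correspondence from the previous paragraph, $f_i$ has degree $x_i-x_{i-1}$, all its roots have valuation $-\frac{y_i-y_{i-1}}{x_i-x_{i-1}}$, and $f=f_1\cdots f_r$, giving (i)--(iii).

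The hardest part will be organizing the two halves cleanly: the inequality $v_p(e_k)\ge V_k$ is soft, but pinning down equality \emph{exactly} at the jump indices — and thereby identifying the vertices of the hull with the breaks of $NP_p(f)$ — rests on the uniqueness-of-minimal-term observation, while the passage from $\overline{\Qp}$ down to $\Qp$ rests entirely on the Galois invariance of $v_p$. Accordingly I would isolate two self-contained lemmas (convexity of $k\mapsto V_k$ with equality at the breaks; Galois invariance of $v_p$ via uniqueness of the extension) and only then assemble the factorization.
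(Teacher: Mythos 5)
Your argument is correct, but there is nothing in the paper to compare it against: the paper does not prove this lemma at all --- it is quoted as a known result with a citation to Hajir's article, where (as in standard references such as Koblitz's book, which the paper also cites) the proof runs along essentially the lines you propose. Your two-stage plan is the classical argument: first, identify the lower convex hull of the coefficient points with the polygon built from the sorted root valuations, via $v_p(c_{n-k})=v_p(c_n)+v_p(e_k)\ge v_p(c_n)+V_k$ together with the uniqueness-of-minimal-term equality at the jump indices $\lambda_{k_0}<\lambda_{k_0+1}$ (your verification that any other $k_0$-fold product of roots has strictly larger valuation is the crux, and it is right); second, descend from $\overline{\Qp}$ to $\Qp$ by grouping the $\Qp$-irreducible factors of $f$ according to the common valuation of their roots, the constancy of $v_p$ on conjugates following from uniqueness of the extension of $v_p$ to $\overline{\Qp}$. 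Three small points you could tighten when writing this up: the identification of the hull also requires equality at the endpoints $k=0$ and $k=n$ (immediate, since $V_0=0$ and $e_n$ is a single monomial, but it should be said, as these are also vertices); the statement as given implicitly assumes $c_0\neq 0$, i.e.\ no root at the origin, since otherwise some $\lambda_j=\infty$ and the break $x_0=0$ does not exist (harmless here, since every polynomial the paper applies the lemma to has nonzero constant term); and your worry about inseparability is vacuous, because $\Qp$ has characteristic $0$, so every irreducible polynomial over it is separable and the Galois group of the splitting field acts transitively on its roots.
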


\begin{lem}\label{emtwo}
For any even integer $n\ge 6$, we have $\E_n(-2)> 0$.
\end{lem}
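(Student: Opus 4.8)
The plan is to express $\E_n(-2)$ as a sum of two partial sums of the convergent series defining $e^{-2}$ and then control the tails via the alternating series estimate. Writing out the definition, $\E_n(-2)=e_n(-2)+e_{n-1}(-2)$, where $e_m(-2)=\sum_{i=0}^{m}\frac{(-2)^i}{i!}$ is the $m$-th partial sum of $e^{-2}=\sum_{i=0}^{\infty}\frac{(-2)^i}{i!}$. The key elementary fact I would record first is that the absolute values $\frac{2^i}{i!}$ of the terms are strictly decreasing for $i\ge 2$, since the ratio of consecutive terms is $\frac{2}{i+1}<1$. Consequently, for every $m\ge 1$ the tail $R_m:=e^{-2}-e_m(-2)=\sum_{i=m+1}^{\infty}\frac{(-2)^i}{i!}$ is an alternating series with strictly decreasing terms, so $R_m$ has the same sign as its leading term $\frac{(-2)^{m+1}}{(m+1)!}$ and satisfies $|R_m|<\frac{2^{m+1}}{(m+1)!}$.

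Next I would apply this to the two partial sums appearing in $\E_n(-2)$, using that $n$ is even. For $m=n$ the leading tail term $\frac{(-2)^{n+1}}{(n+1)!}$ is negative, so $R_n<0$ and hence $e_n(-2)=e^{-2}-R_n>e^{-2}>0$. For $m=n-1$ the leading tail term is $\frac{(-2)^n}{n!}>0$, so $0<R_{n-1}<\frac{2^n}{n!}$ and hence $e_{n-1}(-2)=e^{-2}-R_{n-1}>e^{-2}-\frac{2^n}{n!}$.

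To finish I would supply the crude numerical bounds. From $e<3$ we get $e^{-2}>\frac19$. Since $\frac{2^i}{i!}$ is decreasing for $i\ge 2$, for every even $n\ge 6$ we have $\frac{2^n}{n!}\le\frac{2^6}{6!}=\frac{4}{45}<\frac{5}{45}=\frac19<e^{-2}$, so the second partial sum satisfies $e_{n-1}(-2)>e^{-2}-\frac{2^n}{n!}>\frac19-\frac{4}{45}=\frac{1}{45}>0$. Therefore $\E_n(-2)=e_n(-2)+e_{n-1}(-2)$ is a sum of two positive numbers and is itself positive, completing the proof.

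I would expect the only delicate point to be the correct bookkeeping of signs in the alternating series estimate: one must use the parity of $n$ carefully so that the tail after $e_n(-2)$ opens with a negative term while the tail after $e_{n-1}(-2)$ opens with a positive one, and one must check that the decreasing-terms hypothesis already holds at the first tail index (which it does, since the relevant indices far exceed $2$, so the restriction $n\ge 6$ causes no trouble). The final inequality $\frac{2^6}{6!}<e^{-2}$ is the numerical heart of the argument, and the bound $e<3$ renders it rigorous without appeal to any decimal approximation; note also that the hypothesis $n\ge 6$ is genuinely needed, as the cases $n=2,4$ give $\E_n(-2)=0$.
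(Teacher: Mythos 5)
Your proof is correct, but it takes a genuinely different route from the paper. You work analytically: you compare each partial sum with the limit $e^{-2}$, invoke the Leibniz alternating-series estimate (valid here since $\frac{2^i}{i!}$ is strictly decreasing for $i\ge 2$) to get $e_n(-2)>e^{-2}$ and $e_{n-1}(-2)>e^{-2}-\frac{2^n}{n!}$, and close with the numerical bound $e<3$, giving $\frac{2^n}{n!}\le\frac{2^6}{6!}=\frac{4}{45}<\frac19<e^{-2}$ for $n\ge 6$. The paper instead stays entirely algebraic and finite: it computes the telescoping difference
\begin{align*}
\E_{n}(-2)-\E_{n-2}(-2)=\frac{(-2)^n}{n!}+2\,\frac{(-2)^{n-1}}{(n-1)!}+\frac{(-2)^{n-2}}{(n-2)!}
=\frac{2^{n-2}}{n!}\,(n-1)(n-4)>0
\end{align*}
for even $n\ge 6$, and combines this with the base value $\E_4(-2)=0$ to conclude by induction on even $n$. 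The paper's argument is shorter, needs no convergence facts or numerical estimates, and makes completely transparent where the hypothesis $n\ge 6$ enters (the factor $n-4$ vanishes at $n=4$, matching $\E_2(-2)=\E_4(-2)=0$). Your argument costs more machinery (convergence of the exponential series, the Leibniz test, $e<3$) but buys more information: a uniform positive lower bound (indeed $\E_n(-2)>\frac19+\frac1{45}=\frac{2}{15}$) and the structural explanation that $\E_n(-2)\to 2e^{-2}$, which the induction proof does not reveal. Your sign bookkeeping is right, and your observation that the tails start at index at least $n\ge 6>2$, so the strict-decrease hypothesis holds from the first tail term on, is exactly the point that needed checking.
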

\begin{proof}
By straight forward calculation, we have $\E_2(-2)=\E_4(-2)=0$.
In the following, we assume that $n\ge 6$.
We claim that $\E_n(-2)$ is an increasing function for even
$n$. Since $n$ is even, we have
\begin{align*}
\E_{n}(-2)-\E_{n-2}(-2)&=\frac{(-2)^n}{n!}
+2\sum\limits_{i=0}^{n-1}\frac{(-2)^i}{i!}
-\Big(\frac{(-2)^{n-2}}{{n-2}!}
+2\sum\limits_{i=0}^{n-3}\frac{(-2)^i}{i!}\Big)
\\&=\frac{(-2)^n}{n!}
+2\frac{(-2)^{n-1}}{(n-1)!}+\frac{(-2)^{n-2}}{(n-2)!}\\
&=\frac{2^{n-2}}{n!}\Big(2^2-n2^2+n(n-1)\Big)\\
&=\frac{2^{n-2}}{n!}(n-1)(n-4)>0.
\end{align*}
This finishes the proof of Lemma \ref{emtwo}.
\end{proof}

Lemmas \ref{MTNP} and \ref{emtwo} are the key ingredients in the proof of Theorem \ref{thm2} (i). Lemma \ref{MTNP} tells us that the $2$-adic valuation of each roots of $\E_n(x)$ equals to $1$ when $n=2^s$ for some positive integer $s$. Lemma \ref{emtwo} infers that $-2$ cannot be a root of $\E_n(x)$ if $s>3$.

To treat the case when $n$ is large, we need the definition of Newton index and a theorem of Hajir which plays an important role in the computation of the Galois groups for polynomials. With the help of a theorem of Chebyshev on the existence of a prime between $n/2$ and $n-2$ for all $n\ge 8$, one derives the truth of Theorem \ref{thm2} (ii) for all $n\ge 8$.

\begin{defn}
Given $f\in \Q[x]$, let $\N_f$ be the least common multiple
of the denominators (in lowest terms) of all slopes of the
$p$-adic Newton polygon $NP_p(f)$ as $p$ ranges over all primes.
Such $\N_f$ is called the {\it Newton index} of $f$.
\end{defn}

\begin{lem}\label{HAJIR}\textup{\cite{Hajir}}
For any irreducible polynomial $f\in \Q[x]$
of degree $n$, $\N_{f}$ divides the order of
${\rm Gal}_{\Q}(f)$. Moreover, if $\N_{f}$
has a prime divisor $q$ in the range
$\frac{n}{2}<q <n-2$, then ${\rm Gal}_{\Q}(f)$ contains $A_n$.
\end{lem}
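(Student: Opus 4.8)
The plan is to prove both assertions by passing to the local fields $\Qp$ and exploiting the dictionary between slopes of Newton polygons, ramification indices, and the decomposition groups sitting inside $\Gal_\Q(f)$. Throughout write $K=\spl_\Q f$ and $G=\Gal(K/\Q)$; since $f$ is irreducible of degree $n$, $G$ is a transitive subgroup of $S_n$.

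For the first assertion I would fix a prime $p$ and a segment of $NP_p(f)$ whose slope, in lowest terms, has denominator $b$. By the main theorem on Newton polygons (Lemma \ref{MTNP}), $f$ has a root $\alpha\in\overline{\Qp}$ with $v_p(\alpha)=-a/b$ for some integer $a$ coprime to $b$. Normalizing $v_p$ so that $v_p(p)=1$, every element of $\Qp(\alpha)$ has valuation in $\frac{1}{e}\Z$, where $e=e(\Qp(\alpha)/\Qp)$ is the ramification index; hence $b\mid e$. Choosing a prime $\mathfrak{P}$ of $K$ above $p$, the completion $K_\mathfrak{P}$ is a splitting field of $f$ over $\Qp$ and therefore contains $\alpha$, so $e\mid e(\mathfrak{P}/p)$. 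Since $K/\Q$ is Galois, $K_\mathfrak{P}/\Qp$ is Galois with group the decomposition group $D_\mathfrak{P}\le G$, and $e(\mathfrak{P}/p)\mid |D_\mathfrak{P}|\mid |G|$. Chaining these divisibilities gives $b\mid |G|$. As $\N_f$ is by definition the least common multiple of all such denominators $b$ over all primes $p$, and the least common multiple of divisors of $|G|$ again divides $|G|$, we conclude $\N_f\mid |G|=|\Gal_\Q(f)|$.

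For the second assertion, suppose $q$ is a prime divisor of $\N_f$ with $\frac{n}{2}<q<n-2$. By the first part $q\mid |G|$, so Cauchy's theorem produces an element $\sigma\in G$ of order $q$. Viewing $\sigma\in S_n$, its cycle type consists of $q$-cycles and fixed points; since $2q>n$ there is room for only one $q$-cycle, so $\sigma$ is a single $q$-cycle fixing exactly $n-q$ points, and $q<n-2$ forces $n-q\ge 3$. I would then upgrade transitivity to primitivity. Suppose $G$ preserved a nontrivial block system with blocks of size $d$, where $d\mid n$ and $2\le d\le n/2<q$. As $\sigma$ has prime order $q$, it permutes the blocks in orbits of size $1$ or $q$. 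If some fixed block $B$ met the support of the $q$-cycle, then the entire $\sigma$-orbit of a support point of $B$ — that is, all $q$ support points — would lie in $B$, forcing $d\ge q$, a contradiction. Hence every support point lies in a block moved by $\sigma$; but if $B$ is moved then $\sigma(x)\notin B$ for each $x\in B$, so every point of $B$ lies in the support, and already one orbit of $q$ moved blocks contributes $qd>q$ support points, again a contradiction. Therefore $G$ is primitive, and I would finish by invoking the classical theorem of Jordan: a primitive subgroup of $S_n$ containing a prime-length cycle that fixes at least three points must contain $A_n$. This yields $A_n\subseteq G=\Gal_\Q(f)$.

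The main obstacle is the second part. The primitivity argument must be run using only that $q$ is prime and $q>n/2$, so that no block can absorb the whole cycle and every moved block is forced into the support, while the closing step relies on the nontrivial group-theoretic input of Jordan's theorem, whose hypotheses — primitivity together with a prime-length cycle fixing at least three letters — are exactly what the ranges $\frac{n}{2}<q$ and $q<n-2$ are engineered to supply. In the first part the only delicate point is the passage from the slope denominator to the ramification index, which rests on the normalization of $v_p$ and on the fact that $\Qp(\alpha)$ embeds into the local splitting field $K_\mathfrak{P}$.
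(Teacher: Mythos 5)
The paper does not prove this lemma at all --- it is imported wholesale from Hajir \cite{Hajir} --- so there is no internal proof to compare yours against; the relevant comparison is with the argument in the cited reference, which your proposal essentially reconstructs, and it is correct. For the first claim, the chain of divisibilities $b \mid e(\Qp(\alpha)/\Qp) \mid e(\mathfrak{P}/p) \mid |D_{\mathfrak{P}}| \mid |G|$ is exactly right: $b$ divides the ramification index because the value group of $\Qp(\alpha)^{\times}$ under $v_p$ is $\frac{1}{e}\Z$ while the slope is in lowest terms; $\Qp(\alpha)$ embeds in $K_{\mathfrak{P}}$ because the completion $K_{\mathfrak{P}}=K\Qp$ is a splitting field of $f$ over $\Qp$; and $|D_{\mathfrak{P}}|=e(\mathfrak{P}/p)f(\mathfrak{P}/p)$ since $K/\Q$ is Galois. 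For the second claim, Cauchy's theorem plus $q>n/2$ correctly yields a single $q$-cycle with $n-q\ge 3$ fixed letters, your two-case block argument is complete (a $\sigma$-fixed block meeting the support would absorb the entire $q$-point orbit, forcing $d\ge q>n/2$; a moved block lies wholly inside the support, and its block orbit of size $q$ would supply $qd>q$ support points), and Jordan's theorem applies precisely because $q\le n-3$. Two minor remarks: irreducibility of $f$ is not needed for the first assertion --- it enters only through transitivity, which your primitivity argument presupposes; and in the moved-block case you could have concluded even faster, since the number of blocks $n/d\le n/2<q$ already rules out any block orbit of size $q$, so in fact every block is $\sigma$-fixed and the first case alone suffices.
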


\begin{lem}\label{CHEBY}\textup{\cite{Chebyshev}}
 For each integer $n\geq 8$, there exist a prime number $q$
strictly between $n/2$ and $n-2$.
\end{lem}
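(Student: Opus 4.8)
The plan is to reduce Lemma~\ref{CHEBY} to a single quantitative statement about primes in short intervals and then to clear the small values of $n$ by a direct check. The first thing to observe is that the plain Bertrand postulate (the case $k=1$ of Lemma~\ref{Erdos}, which produces a prime in the interval $(m,2m)$) does \emph{not} by itself suffice here: the target interval $(n/2,\,n-2)$ has length ratio $(n-2)/(n/2)=2-4/n<2$, so a guaranteed Bertrand interval of ratio exactly $2$ need not sit inside it. Concretely, to force a prime above $n/2$ one must take $m\approx n/2$, and then $2m\approx n$, so the prime supplied could equal $n-1$ or $n-2$, which our statement forbids. Hence I would invoke an estimate that forces a prime into an interval of ratio strictly below $2$.

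For the main range of $n$ I would use a Nagura-type bound: for every real $x\ge 25$ there exists a prime $p$ with $x<p<\tfrac{6}{5}x$. Applying this with $x=n/2$ yields a prime $p$ satisfying $\tfrac{n}{2}<p<\tfrac{3n}{5}$, and since $\tfrac{3n}{5}\le n-2$ holds for every $n\ge 5$, this $p$ lies in the open interval $(n/2,\,n-2)$, as required. The only constraint is the hypothesis $x=n/2\ge 25$, i.e. $n\ge 50$; thus this argument disposes of all $n\ge 50$ at once. Alternatively, one may run the same reduction from Chebyshev's explicit bounds $A\,x/\log x<\pi(x)<B\,x/\log x$, showing that $\pi(n-3)-\pi(\lfloor n/2\rfloor)>0$ for all large $n$ (here $A-\tfrac{B}{2}>0$ with Chebyshev's constants makes the leading term positive); this is closer in spirit to the cited source but forces a larger cutoff and hence more cases to check by hand.

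It then remains to verify the finitely many cases $8\le n\le 49$, which is a routine computation: for each such $n$ one simply exhibits a prime $q$ with $n/2<q<n-2$. For instance the primes $5,7,11,13,23,43$ already cover consecutive blocks of these values (e.g. $q=5$ for $n\in\{8,9\}$, $q=7$ for $10\le n\le 13$, and so on up to $q=43$ for $46\le n\le 49$), so every integer $n$ in the range $8\le n\le 49$ is handled. Combining the two ranges proves the lemma. The only genuinely non-elementary ingredient is the short-interval prime estimate of ratio below $2$; this is the step that cannot be replaced by Bertrand's postulate, and it is where I expect the real content to lie, the remaining arguments being elementary bookkeeping together with a finite verification.
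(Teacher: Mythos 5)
Your proof is correct. The paper itself offers no argument for this lemma at all: it is stated with a bare citation to Chebyshev's 1852 memoir, whose explicit bounds of the form $A\,x/\log x<\pi(x)<B\,x/\log x$ (with $B/A<2$) are what make $\pi(n-3)-\pi(n/2)>0$ for large $n$ — essentially the second route you sketch. Your main argument is a genuinely worked-out alternative: Nagura's theorem (a prime in $(x,\tfrac{6}{5}x)$ for $x\ge 25$) applied at $x=n/2$ gives a prime below $\tfrac{3n}{5}\le n-2$ once $n\ge 50$, and your finite verification for $8\le n\le 49$ is correct — indeed the primes $5,7,11,13,23,43$ cover all of $8\le n\le 85$, since $q$ works precisely for $2q+2>n>q+2$. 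Two things are worth highlighting as merits of your write-up: first, you correctly identify the one point of real content, namely that Bertrand's postulate (the $k=1$ case of Lemma~\ref{Erdos}, already available in the paper) cannot be made to work because the interval $(n/2,n-2)$ has ratio strictly below $2$, so some short-interval estimate of ratio $<2$ is unavoidable; second, your version makes the cutoff explicit and checkable, whereas the paper's citation leaves the reader to extract the statement from Chebyshev's bounds. The only caveat is that Nagura's theorem is itself a nontrivial imported result (from 1952, not from the cited source), so your proof is complete only modulo that citation — but this is exactly the status the lemma has in the paper anyway.
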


To handle the small values of $n$, we provide following
lemmas from standard Galois theory. Lemmas \ref{Subgroup}
and \ref{Symmetric} transfer the calculation of
${\rm Gal}_{\Q}(\E_n)$ into a concrete way.

\begin{lem}\label{Subgroup}\textup{\cite{Grillet}}
Let $f(x)\in \Z[x]$ with no repeated roots and the leading
coefficient $1$. Let $p$ be a prime number which is
coprime to the discriminant of $f(x)$ over $\Q$. Then ${\rm Gal}_{\F_p}(\ang{f}_p)$ is a subgroup of
${\rm Gal}_{\Q}(f)$.
\end{lem}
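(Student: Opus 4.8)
The plan is to identify ${\rm Gal}_{\F_p}(\ang{f}_p)$ with the decomposition group of a prime lying over $p$ inside the splitting field of $f$, using the hypothesis $p\nmid\Disc(f)$ to guarantee that this prime is unramified. Since $f$ is monic with integer coefficients, its roots $\alpha_1,\dots,\alpha_n$ are algebraic integers; let $K=\Q(\alpha_1,\dots,\alpha_n)$ be the splitting field and $G:={\rm Gal}_{\Q}(f)={\rm Gal}(K/\Q)$, so that $G$ permutes the $\alpha_i$ and acts faithfully on them. I would work in the ring $R=\Z[\alpha_1,\dots,\alpha_n]$, which is a finite $\Z$-module stable under $G$, and fix a maximal ideal $\mathfrak m$ of $R$ containing $p$; then $\mathfrak m\cap\Z=p\Z$ and the residue field $R/\mathfrak m$ is a finite field of characteristic $p$. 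Reducing the factorization $f(x)=\prod_{i=1}^{n}(x-\alpha_i)$ modulo $\mathfrak m$ gives $\ang{f}_p(x)=\prod_{i=1}^{n}(x-\bar\alpha_i)$ in $(R/\mathfrak m)[x]$, so $R/\mathfrak m=\F_p[\bar\alpha_1,\dots,\bar\alpha_n]$ is exactly the splitting field of $\ang{f}_p$ over $\F_p$. Because $\Disc(f)=\prod_{i<j}(\alpha_i-\alpha_j)^2$ is a rational integer prime to $p$, it is a unit modulo $\mathfrak m$, whence the $\bar\alpha_i$ are pairwise distinct; thus $\ang{f}_p$ is separable and both $G$ and ${\rm Gal}_{\F_p}(\ang{f}_p)$ act faithfully on the respective sets of (distinct) roots.

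Next I would introduce the decomposition group $D_{\mathfrak m}=\set{\sigma\in G:\sigma(\mathfrak m)=\mathfrak m}$ and the reduction homomorphism $\rho\colon D_{\mathfrak m}\to{\rm Gal}((R/\mathfrak m)/\F_p)={\rm Gal}_{\F_p}(\ang{f}_p)$ sending each $\sigma$ to the $\F_p$-automorphism it induces on $R/\mathfrak m$. The two claims to establish are that $\rho$ is injective and surjective. Injectivity is where the discriminant hypothesis does its work: if $\rho(\sigma)=\mathrm{id}$ then, writing $\sigma(\alpha_i)=\alpha_{\pi(i)}$ for the permutation $\pi$ induced by $\sigma$, one gets $\alpha_{\pi(i)}\equiv\alpha_i\pmod{\mathfrak m}$ for every $i$; since every difference $\alpha_i-\alpha_j$ with $i\ne j$ is a unit modulo $\mathfrak m$, this forces $\pi=\mathrm{id}$, so $\sigma$ fixes each $\alpha_i$ and hence $\sigma=\mathrm{id}$ on $K$. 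In the language of algebraic number theory this is precisely the statement that the inertia group at $\mathfrak m$ is trivial.

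For surjectivity I would lift the Frobenius automorphism, which generates the cyclic group ${\rm Gal}_{\F_p}(\ang{f}_p)$. Using that $R/pR$ has only finitely many maximal ideals and that distinct maximal ideals are comaximal, the Chinese Remainder Theorem lets me choose $\beta\in R$ with $\bar\beta$ a generator of $R/\mathfrak m$ over $\F_p$ and $\beta\equiv0$ modulo every maximal ideal over $p$ other than $\mathfrak m$. The polynomial $\prod_{\sigma\in G}(x-\sigma\beta)$ is $G$-invariant with algebraic-integer coefficients, hence lies in $\Z[x]$; reducing it modulo $\mathfrak m$ shows that $\bar\beta^{\,p}$, being a root of any $\F_p$-polynomial having $\bar\beta$ as a root, equals $\overline{\sigma\beta}$ for some $\sigma\in G$. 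The vanishing of $\beta$ at the remaining primes forces $\sigma\in D_{\mathfrak m}$, and then $\rho(\sigma)$ is the Frobenius. Combining the two steps yields an isomorphism $D_{\mathfrak m}\cong{\rm Gal}_{\F_p}(\ang{f}_p)$ compatible with the actions on the roots, so ${\rm Gal}_{\F_p}(\ang{f}_p)\cong D_{\mathfrak m}\le G={\rm Gal}_{\Q}(f)$, as desired. I expect the surjectivity (Frobenius-lifting) step to be the main obstacle: it requires the Chinese Remainder Theorem and the $G$-invariant auxiliary polynomial, and care is needed to confirm that the chosen $\sigma$ indeed stabilizes $\mathfrak m$; by contrast the setup and the injectivity argument follow quite directly once the discriminant hypothesis is invoked to keep the reduced roots distinct.
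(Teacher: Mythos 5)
Your proof is correct, and there is nothing in the paper to compare it against: the authors do not prove this lemma at all, they quote it from Grillet's textbook \cite{Grillet}. What you have written out is the standard proof of Dedekind's reduction theorem, which is presumably the cited argument: realize ${\rm Gal}_{\F_p}(\ang{f}_p)$ as the decomposition group $D_{\mathfrak m}$ of a maximal ideal $\mathfrak m\supseteq pR$ of $R=\Z[\alpha_1,\dots,\alpha_n]$, prove injectivity of the reduction map from $p\nmid\Disc(f)$ (the differences $\alpha_i-\alpha_j$ stay nonzero mod $\mathfrak m$, i.e.\ inertia is trivial), and prove surjectivity by lifting Frobenius via the Chinese Remainder Theorem and the $G$-invariant polynomial $\prod_{\sigma\in G}(x-\sigma\beta)\in\Z[x]$. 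Both halves are carried out correctly, and you rightly record that the resulting embedding $D_{\mathfrak m}\cong{\rm Gal}_{\F_p}(\ang{f}_p)\le{\rm Gal}_{\Q}(f)$ is compatible with the permutation actions on the roots; this is the form in which the lemma is actually used later (Lemma \ref{Subgroups}). One pedantic repair: in the surjectivity step you should take $\bar\beta$ to be a generator of the cyclic group $(R/\mathfrak m)^{\times}$ (or dispose of the trivial case $R/\mathfrak m=\F_p$ separately), so that $\bar\beta^{\,p}\neq 0$; this is what forces $\overline{\sigma\beta}\neq 0$, hence $\sigma\beta\notin\mathfrak m$ and $\sigma\in D_{\mathfrak m}$, exactly as your argument requires.
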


\begin{lem}\label{Subgroups}
Let $f(x)$ be a separable monic polynomial with integral coefficients of degree $n$.
Let $p$ be a prime number which is coprime to the discriminant of $f(x)$ over $\Q$. Then each
of the following is true:

{\rm (i).} If $\ang{f(x)}_p=\ang{x+a}_p\ang{g(x)}_p$ with $\ang{g(x)}_p$
irreducible over $\F_p$, then ${\rm Gal}_{\Q}(f)$ contains an $(n-1)$-cycle.

{\rm (ii).} If $n$ is odd and $\ang{f(x)}_p=\ang{h(x)}_p\ang{g(x)}_p$ with
$\deg \ang{h(x)}_p=2$ and both of $\ang{h(x)}_p$ and
$\ang{g(x)}_p$ are irreducible over $\F_p$, then
${\rm Gal}_{\Q}(f)$ contains a transposition.

{\rm (iii).} If $n$ is even and $\ang{f(x)}_p=\ang{(x+a)}_p\ang{h(x)}_p\ang{g(x)}_p$
with $\deg \ang{h(x)}_p=2$ and both of $\ang{h(x)}_p$ and $\ang{g(x)}_p$ are irreducible
over $\F_p$, then ${\rm Gal}_{\Q}(f)$ contains a transposition.
\end{lem}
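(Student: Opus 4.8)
The plan is to exploit Lemma \ref{Subgroup}, which identifies ${\rm Gal}_{\F_p}(\ang{f}_p)$ with a subgroup of ${\rm Gal}_{\Q}(f)$, together with the classical fact that over the finite field $\F_p$ the Galois group of a separable polynomial is cyclic, generated by the Frobenius automorphism $\phi\colon x\mapsto x^p$. The essential point I would record first is the cycle-structure interpretation of $\phi$: if $\ang{f}_p$ factors into distinct monic irreducibles of degrees $d_1,\dots,d_r$ over $\F_p$, then, viewed as a permutation of the $n$ roots of $\ang{f}_p$ in $\overline{\F_p}$, the Frobenius $\phi$ is a product of disjoint cycles of lengths $d_1,\dots,d_r$. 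This holds because the roots of an irreducible factor of degree $d$ form a single orbit of size $d$ under $\phi$, namely $\theta,\theta^p,\dots,\theta^{p^{d-1}}$ for any one root $\theta$. Since $p$ is coprime to $\Disc f$, the reduction $\ang{f}_p$ is separable, so these factors are distinct and the cycles are genuinely disjoint; hence by Lemma \ref{Subgroup} the permutation $\phi$, and all of its powers, lie in ${\rm Gal}_{\Q}(f)$.

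Granting this, part (i) is immediate: the hypothesis gives factor degrees $1$ and $n-1$, so $\phi$ fixes the root of $\ang{x+a}_p$ and cyclically permutes the remaining $n-1$ roots, i.e. $\phi$ is an $(n-1)$-cycle in ${\rm Gal}_{\Q}(f)$. For part (ii), the factor degrees are $2$ and $n-2$, so $\phi$ is the product of a disjoint transposition $\tau$ and an $(n-2)$-cycle $\sigma$. Here I would raise $\phi$ to the power $n-2$: since $\sigma^{\,n-2}$ is the identity, while $n$ odd forces $n-2$ to be odd so that $\tau^{\,n-2}=\tau$, we obtain $\phi^{\,n-2}=\tau$, a transposition lying in ${\rm Gal}_{\Q}(f)$.

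Part (iii) follows by the same device: the factor degrees are $1$, $2$ and $n-3$, so $\phi$ is the product of a fixed point, a transposition $\tau$, and an $(n-3)$-cycle $\sigma$; as $n$ is even, $n-3$ is odd, whence $\phi^{\,n-3}=\tau^{\,n-3}\sigma^{\,n-3}=\tau$ is again a transposition in ${\rm Gal}_{\Q}(f)$.

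The only substantive step is the cycle-structure description of Frobenius; everything else is an elementary parity argument on the exponents. The mild subtlety I would flag is the need for separability of $\ang{f}_p$, which is guaranteed by $p\nmid\Disc f$, so that distinct irreducible factors share no common root and the disjoint-cycle decomposition, and therefore the clean behaviour under taking powers, remains valid.
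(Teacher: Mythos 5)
Your proposal is correct and takes essentially the same approach as the paper: both embed the Frobenius $\phi$ into ${\rm Gal}_{\Q}(f)$ via Lemma \ref{Subgroup}, read off its cycle structure from the degrees of the irreducible factors of $\ang{f}_p$, and raise it to a power that kills the long cycle while preserving the transposition by the same parity argument ($n-2$ odd, resp.\ $n-3$ odd). The only cosmetic differences are that you invoke the general cycle-type description of Frobenius up front and handle (iii) directly with $\phi^{n-3}$, whereas the paper computes the splitting fields ($\F_{p^{n-1}}$, $\F_{p^{2(n-2)}}$) explicitly and reduces (iii) to (ii) by discarding the linear factor.
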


\begin{proof}
{\rm (i).} Since $\ang{f(x)}_p=\ang{x+a}_p\ang{g(x)}_p$ and $\ang{g(x)}_p$
irreducible over $\F_p$, we have ${\rm spl}_{\F_p}(\ang{f}_p)={\rm spl}_{\F_p}(\ang{g}_p)$.
By the structure theorem of finite field, one has ${\rm spl}_{\F_p}(\ang{g}_p)=\F_{p^{n-1}}$.
So ${\rm Gal}_{\F_p}(\ang{f}_p)={\rm Gal}_{\F_p}(\ang{g}_p)={\rm Gal}(\F_{p^{n-1}}/\F_p)$ is a cyclic group of order $n-1$. By Lemma \ref{Subgroup}, ${\rm Gal}_{\F_p}(\ang{f}_p)$ is a subgroup of ${\rm Gal}_{\Q}(f)$.
Hence ${\rm Gal}_{\Q}(f)$ contains an $(n-1)$-cycle.

{\rm (ii).} Since $\ang{f(x)}_p=\ang{h(x)}_p\ang{g(x)}_p$ and $\deg \ang{h(x)}_p=2$,
we have $\deg \ang{g(x)}_p=n-2$. Then the structure theorem of finite field tells us that ${\rm spl}_{\F_p}(\ang{h}_p)=\F_{p^{2}}$ and ${\rm spl}_{\F_p}(\ang{g}_p)=\F_{p^{n-2}}$. Since $\ang{f(x)}_p=\ang{h(x)}_p\ang{g(x)}_p$, we have ${\rm spl}_{\F_p}(\ang{f}_p)$ is the
composition ${\rm spl}_{\F_p}(\ang{h}_p){\rm spl}_{\F_p}(\ang{g}_p)$ of ${\rm spl}_{\F_p}(\ang{h}_p)$ and ${\rm spl}_{\F_p}(\ang{g}_p)$.
Since $n$ is odd, we have $\gcd(2,n-2)=1$ which implies that ${\rm spl}_{\F_p}(\ang{f})=\F_{p^{2(n-2)}}$. It follows that ${\rm Gal}_{\F_p}(\ang{f})$ is
a cyclic group of order $2(n-2)$.

Let ${\rm Gal}_{\F_p}(\ang{f}_p)=\ang{\sigma}$ with $\sigma$ being the Frobenius action.
We prove that $\sigma^{n-2}$ is a transposition on the roots of $\ang{f(x)}_p$.
Let $\alpha_1$ and $\alpha_2$ be the roots of $h(x)$ and $\beta_1,...,\beta_{n-2}$
be the roots of $g(x)$. Since $\beta_i\in \F_{p^{n-2}}$ for each $i\in \set{1,2,...,n-2}$, we have $\sigma^{n-2}(\beta_i)=\beta_i^{p^{n-2}}=\beta_i$.
So $\sigma^{n-2}$ fixes the roots of $g(x)$. But $\alpha_1,\alpha_2\in \F_{p^2}$.
Thus $\sigma^{n-2}(\alpha_1)=\alpha_1^{p^{n-2}}=\alpha_1^p=\alpha_2$
and $\sigma^{n-2}(\alpha_2)=\alpha_2^{p^{n-2}}=\alpha_2^p=\alpha_1$.
Hence $\sigma^{n-2}$ is a transposition on the roots of $\ang{f(x)}_p$, i.e.
${\rm Gal}_{\F_p}(\ang{f}_p)$ contains a transposition. By Lemma \ref{Subgroup}, ${\rm Gal}_{\F_p}(\ang{f}_p)$ is a subgroup of ${\rm Gal}_{\Q}(f)$.
So ${\rm Gal}_{\Q}(f)$ contains a transposition.

{\rm (iii).} Let $\ang{f'(x)}_p=\ang{h(x)}_p\ang{g(x)}_p$. By Lemma \ref{Subgroups} (ii),
we have ${\rm Gal}_{\F_p}(\ang{f'}_p)$ contains a transposition. Since
${\rm Gal}_{\F_p}(\ang{f'}_p)={\rm Gal}_{\F_p}(\ang{f}_p)$, ${\rm Gal}_{\F_p}(\ang{f}_p)$
contains a transposition.
By Lemma \ref{Subgroup}, ${\rm Gal}_{\F_p}(\ang{f}_p)$ is a subgroup of ${\rm Gal}_{\Q}(f)$.
This infers that ${\rm Gal}_{\Q}(f)$ contains a transposition.
\end{proof}

\begin{lem}\label{Symmetric}\textup{\cite{Grillet}}
Let $G$ be a transitive permutation group of $n$ letters.
If $G$ contains a transposition and an $(n-1)$-cycle, then
$G=S_n$.
\end{lem}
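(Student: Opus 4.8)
The plan is to prove the lemma directly by showing that, after a harmless relabelling of the $n$ letters, the group $G$ must contain every transposition of the form $(n\ j)$, and then to invoke the elementary fact that these transpositions already generate the full symmetric group. Transitivity and the presence of an $(n-1)$-cycle will play two clearly separated roles: transitivity is used to bring a transposition ``into contact'' with a distinguished letter, while the $(n-1)$-cycle is used to spread that single transposition across all remaining letters.

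First I would normalize the setup. An $(n-1)$-cycle fixes exactly one of the $n$ letters, so after relabelling I may assume that the given $(n-1)$-cycle $\sigma\in G$ fixes the letter $n$ and acts as $(1\ 2\ \cdots\ n-1)$ on the remaining letters; in particular $\sigma$ is transitive on $\{1,\dots,n-1\}$. Let $\tau=(a\ b)\in G$ be the given transposition. Using transitivity I would choose $g\in G$ with $g(a)=n$; since $g$ is a bijection and $a\ne b$, the image $c:=g(b)$ lies in $\{1,\dots,n-1\}$, and therefore $\tau':=g\tau g^{-1}=(g(a)\ g(b))=(n\ c)\in G$. Thus $G$ contains at least one transposition of the form $(n\ c)$ with $1\le c\le n-1$.

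Next I would conjugate $\tau'$ by powers of $\sigma$. Because $\sigma$ fixes $n$, the standard conjugation formula for cycles gives $\sigma^{k}\tau'\sigma^{-k}=(n\ \sigma^{k}(c))$ for every integer $k$, and since $\sigma$ permutes $\{1,\dots,n-1\}$ as a single $(n-1)$-cycle, the letters $\sigma^{k}(c)$ run through all of $\{1,\dots,n-1\}$ as $k$ varies. Hence $G$ contains every \emph{star} transposition $(n\ j)$ with $1\le j\le n-1$. The identity $(n\ i)(n\ j)(n\ i)=(i\ j)$ then shows that these star transpositions generate all transpositions of $\{1,\dots,n\}$, and since transpositions generate $S_n$ we conclude $G=S_n$.

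The argument is essentially routine, so the only points requiring care are structural rather than computational: one must verify that the conjugated transposition $\tau'$ genuinely pairs $n$ with a second letter lying in $\{1,\dots,n-1\}$, for otherwise the subsequent $\sigma$-conjugation would merely fix it; and one must use the two hypotheses in exactly their distinct roles, as indicated above. I expect the minor obstacle to be simply presenting the normalization cleanly so that no separate case analysis is needed (the case where $\tau$ already moves $n$ is subsumed by taking $g$ to be the identity). An equivalent, more conceptual route would note that the point-stabilizer $G_n$ contains $\sigma$ and is therefore transitive on $\{1,\dots,n-1\}$, so $G$ is $2$-transitive and hence primitive, whence a primitive group containing a transposition must equal $S_n$; but the direct generation argument above is self-contained and avoids appealing to that theorem.
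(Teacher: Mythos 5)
Your proposal is correct. Note, however, that the paper offers no proof of this lemma at all: it is quoted verbatim from the cited algebra text \cite{Grillet} and used as a black box, so there is no internal argument to compare against. Your proof is a clean, self-contained replacement for that citation. Each step checks out: relabelling (i.e.\ conjugating $G$ inside $S_n$) preserves all hypotheses and the conclusion; transitivity supplies $g\in G$ with $g(a)=n$, whence $g\tau g^{-1}=(n\ c)$ with $c\in\{1,\dots,n-1\}$; conjugating by powers of the $(n-1)$-cycle $\sigma$, which fixes $n$, yields every star transposition $(n\ j)$, since $\langle\sigma\rangle$ is transitive on $\{1,\dots,n-1\}$; and the identity $(n\ i)(n\ j)(n\ i)=(i\ j)$ together with the fact that transpositions generate $S_n$ finishes the argument. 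Your closing remark is also accurate: one could instead observe that the point stabilizer $G_n\supseteq\langle\sigma\rangle$ is transitive on the remaining letters, so $G$ is $2$-transitive, hence primitive, and then invoke Jordan's theorem that a primitive group containing a transposition is $S_n$; your direct generation argument is preferable here precisely because it avoids that heavier machinery and keeps the lemma elementary, which is in the spirit of how the paper uses it (for small cases $n=5,6,7$ in the proof of Theorem \ref{thm2}).
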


To determine whether the Galois group of a polynomial is $A_n$ or $S_n$, we need the results on the discriminant of polynomials. At first, we present some facts about the discriminant in general.
Let $f(x)\in F[x]$ be a given monic polynomial of degree $n$.
Let $\alpha_1,...,\alpha_n$ be all the roots of $f(x)$. Then
$${\rm Disc}_{F}(f):=\prod_{1\le i<j \le n }(\alpha_i-\alpha_j)^{2}$$
is called the {\it discriminant} of $f(x)$ over $F$. It is well known
that
\begin{align}\label{DISC}
{\rm Disc}_{F}(f)=(-1)^{\frac{n(n-1)}{2}}\prod_{i=1}^n f'(\alpha_i),
\end{align}
 where $f'(x)$ is the usual formal derivative of $f(x)$.
The following result
computes the discriminant of $n!\E_{n}(x)$ over $\Q$.

\begin{lem}\label{DISCR}
The discriminant of $n!\E_{n}(x)$ over $\Q$ is equal to
$(-1)^{\frac{n(n-1)}{2}} 2^{n-1}(n!)^n\E_n(-n).$
\end{lem}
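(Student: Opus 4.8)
The plan is to apply the general formula (\ref{DISC}), which reduces the computation to evaluating $\prod_{i=1}^{n}P'(\alpha_i)$, where $P(x):=n!\E_n(x)$ is the monic integer polynomial in question and $\alpha_1,\dots,\alpha_n$ are its roots. The entire argument rests on a derivative identity that rewrites each $P'(\alpha_i)$ in closed form, after which the product is read off from the coefficients of $P$ by Vieta's relations.

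First I would compute $\E_n'(x)$ by differentiating term by term: since $\frac{d}{dx}\frac{x^i}{i!}=\frac{x^{i-1}}{(i-1)!}$, the derivative telescopes to $\E_n'(x)=\E_{n-1}(x)$. Next a one-line subtraction gives $\E_n(x)-\E_{n-1}(x)=\frac{x^n}{n!}+\frac{x^{n-1}}{(n-1)!}=\frac{x^{n-1}(x+n)}{n!}$, and combining the two yields the key identity $\E_n(x)-\E_n'(x)=\frac{x^{n-1}(x+n)}{n!}$. Evaluating this at a root $\alpha_i$ of $\E_n$ (equivalently of $P$), where $\E_n(\alpha_i)=0$, and multiplying by $n!$ produces $P'(\alpha_i)=n!\,\E_n'(\alpha_i)=-\alpha_i^{\,n-1}(\alpha_i+n)$.

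With this in hand the product factors as $\prod_{i=1}^{n}P'(\alpha_i)=(-1)^n\big(\prod_{i=1}^{n}\alpha_i\big)^{n-1}\prod_{i=1}^{n}(\alpha_i+n)$. The two remaining products come from $P$ directly: the constant term of $P$ equals $n!\,\E_n(0)=2\,n!$, so Vieta gives $\prod_{i=1}^{n}\alpha_i=(-1)^n\cdot 2\,n!$; and since $P(x)=\prod_{i=1}^{n}(x-\alpha_i)$, one has $\prod_{i=1}^{n}(\alpha_i+n)=(-1)^nP(-n)=(-1)^n\,n!\,\E_n(-n)$. Substituting these, using that $n(n-1)$ is even so that $\big(\prod_{i=1}^{n}\alpha_i\big)^{n-1}=2^{n-1}(n!)^{n-1}$ carries no sign, and collecting the powers of $(-1)$, $2$, and $n!$, reduces $\prod_{i=1}^{n}P'(\alpha_i)$ to $2^{n-1}(n!)^{n}\E_n(-n)$ with net sign $+1$. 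Multiplying by $(-1)^{n(n-1)/2}$ from (\ref{DISC}) then gives the claimed value.

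The computation is essentially mechanical once the derivative identity is in place, so the only genuine content is the identity $\E_n(x)-\E_n'(x)=\frac{x^{n-1}(x+n)}{n!}$, which is precisely what converts the derivative values $P'(\alpha_i)$ into an expression in $\alpha_i$ alone. The one place demanding care is the sign bookkeeping: tracking the factor $(-1)^n$ coming from each $-\alpha_i^{\,n-1}(\alpha_i+n)$, the factor $(-1)^n$ in $\prod_{i=1}^{n}(\alpha_i+n)=(-1)^nP(-n)$, and the parity of $n(n-1)$, all of which must cancel so that the product itself is sign-free and the final factor $(-1)^{n(n-1)/2}$ arises solely from formula (\ref{DISC}).
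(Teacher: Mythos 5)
Your proposal is correct and follows essentially the same route as the paper: both apply the discriminant formula (\ref{DISC}), use the identity $\E_n(x)-\E_n'(x)=\frac{x^{n-1}(x+n)}{n!}$ to evaluate $P'(\alpha_i)=-\alpha_i^{n-1}(\alpha_i+n)$ at the roots, and finish with Vieta's relation $\prod_i\alpha_i=(-1)^n\,2\,n!$ together with $\prod_i(\alpha_i+n)=(-1)^n\,n!\,\E_n(-n)$. The only cosmetic difference is that you phrase the key identity via $\E_n'=\E_{n-1}$ and factor out the signs explicitly, whereas the paper writes $\E_n'(x)=e_{n-1}(x)+e_{n-2}(x)$ and absorbs the sign into the factor $\prod_i(-\alpha_i-n)$; the computations are identical.
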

\begin{proof}
Since $\E_n(x)=e_n(x)+e_{n-1}(x)$, one has $$\E_n'(x)=e_n'(x)+e_{n-1}'(x)=e_{n-1}(x)+e_{n-2}(x).$$
It follows that
\begin{align}\label{eqee}
\E_n'(x)=\E_n(x)-\Big(\frac{x^n}{n!}+\frac{x^{n-1}}{(n-1)!}\Big).
\end{align}
Then by (\ref{DISC}), we have
\begin{align}\label{lem2.11}
{\rm Disc}_{\Q}(n!\E_n)=(-1)^{\frac{n(n-1)}{2}}\prod_{i=1}^{n}(n!\E_n'(\alpha_i)),
\end{align}
where $\alpha_1,...,\alpha_n$ are all the roots of $n!\E_n(x)$.
However, one has
$$\prod\limits_{i=1}^{n}\alpha_i=(-1)^n2n!$$
by Vieta theorem and $n!\E_n(-n)=\prod\limits_{i=1}^{n}(-n-\alpha_i)$.
Thus by (\ref{eqee}) and (\ref{lem2.11}) and noticing that
$\E_n(\alpha_i)=0$ for all $1\le i\le n$, we derive that
\begin{align*}
{\rm Disc}_{\Q}(n!\E_n)&=(-1)^{\frac{n(n-1)}{2}} \prod\limits_{i=1}^{n}\Big(n!\Big(\E_n(\alpha_i)-\Big(\frac{\alpha_i^n}{n!}+\frac{\alpha_i^{n-1}}{(n-1)!}\Big)\Big)\Big)\\
&=(-1)^{\frac{n(n-1)}{2}} \prod\limits_{i=1}^{n}(-\alpha_i^n-n\alpha_i^{n-1})\\
&=(-1)^{\frac{n(n-1)}{2}} \prod\limits_{i=1}^{n}\alpha_i^{n-1}\prod\limits_{i=1}^{n}(-\alpha_i-n)\\
&=(-1)^{\frac{n(n-1)}{2}} \Big(\prod\limits_{i=1}^{n}\alpha_i\Big)^{n-1}\Big(n!\E_n(-n)\Big)\\
&=(-1)^{\frac{n(n-1)}{2}} 2^{n-1}(n!)^n\E_n(-n)
\end{align*}
as required. This completes the proof of Lemma \ref{DISCR}.
\end{proof}

We also need the following standard result in Galois theory
that is about the relation between ${\rm Gal}_{F}(f)$ and the discriminant of $f(x)$ over $F$.

\begin{lem}\label{ANSN}\textup{\cite{Grillet}}
Let $f(x)\in F[x]$ be a polynomial of degree $n$. Then each of the following holds:

\textup{(i).} ${\rm Gal}_{F}(f)$ is transitive if and only if $f(x)$
is irreducible over $F$.

\textup{(ii).} If $\operatorname{char}(F) \neq 2$,
then ${\rm Gal}_{F}(f)\subseteq A_n$ if and only if ${\rm Disc}_{F}(f)$ is a square in $F$.
\end{lem}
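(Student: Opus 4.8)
The plan is to realize ${\rm Gal}_{F}(f)$ concretely as a group of permutations of the roots $\alpha_1,\dots,\alpha_n$ of $f$ inside the splitting field $\spl_F f$, and then to deduce both parts from this action together with the fundamental theorem of Galois theory. Throughout I would work under the (harmless in our applications) assumption that $f$ is separable, so that it has $n$ distinct roots and $\spl_F f/F$ is a Galois extension whose fixed field is exactly $F$; this is precisely the situation in which ${\rm Disc}_F(f)\ne 0$ and in which the lemma will be invoked for $n!\E_n(x)$.

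For part (i), I would prove the two implications separately. If $f$ factors nontrivially as $f=gh$ with $g,h\in F[x]$ of positive degree, then every $\sigma\in{\rm Gal}_F(f)$ sends a root of $g$ to another root of $g$ (since $\sigma$ fixes the coefficients of $g$), so the roots of $g$ form a nonempty proper subset of the full root set that is stable under the action, whence the action is not transitive; contrapositively, transitivity forces $f$ to be irreducible. Conversely, if $f$ is irreducible then it is the minimal polynomial over $F$ of each of its roots, so for any pair $\alpha_i,\alpha_j$ there is an $F$-isomorphism $F(\alpha_i)\to F(\alpha_j)$ carrying $\alpha_i$ to $\alpha_j$; because $\spl_F f$ is normal over $F$ this isomorphism extends to an element of ${\rm Gal}_F(f)$ moving $\alpha_i$ to $\alpha_j$, which gives transitivity.

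For part (ii), the key object is the square root of the discriminant, $\delta:=\prod_{1\le i<j\le n}(\alpha_i-\alpha_j)$, which lies in $\spl_F f$, is nonzero by separability, and satisfies $\delta^2={\rm Disc}_F(f)\in F$. The crux of the argument is the sign computation: writing $\pi_\sigma$ for the permutation that $\sigma\in{\rm Gal}_F(f)$ induces on the roots, one checks that $\sigma(\delta)=\operatorname{sgn}(\pi_\sigma)\,\delta$, since a single transposition of two roots negates exactly the factors involving them and every permutation is a product of transpositions. By the Galois correspondence, $\delta\in F$ if and only if $\delta$ is fixed by every $\sigma$; and because $\operatorname{char}(F)\ne 2$ forces $\delta\ne-\delta$, the identity $\sigma(\delta)=\operatorname{sgn}(\pi_\sigma)\,\delta$ shows that $\sigma$ fixes $\delta$ exactly when $\operatorname{sgn}(\pi_\sigma)=1$. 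Hence ${\rm Gal}_F(f)\subseteq A_n$ if and only if $\delta\in F$, and the latter is precisely the assertion that ${\rm Disc}_F(f)=\delta^2$ is a square in $F$, completing (ii).

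Since this is a standard result, I do not expect a deep obstacle; the two points that require genuine care are the separability hypothesis and the role of $\operatorname{char}(F)\ne 2$. Separability is what guarantees $\delta\ne 0$ and makes $\spl_F f/F$ Galois, so that both the permutation action of part (i) and the fixed-field characterization of part (ii) are available, whereas without it the stated equivalences can fail. The hypothesis $\operatorname{char}(F)\ne 2$ is exactly what lets one detect $\operatorname{sgn}(\pi_\sigma)$ from whether $\delta$ is fixed, for if $\delta=-\delta$ one could no longer distinguish even from odd permutations by the membership of $\delta$ in $F$.
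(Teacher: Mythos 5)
The paper offers no proof of this lemma at all --- it is quoted from Grillet's textbook \cite{Grillet} as a standard fact --- so there is no internal argument to compare against; your proof is the standard textbook one and it is correct: part (i) by stability of the root set of a proper factor plus extension of the $F$-isomorphism $F(\alpha_i)\to F(\alpha_j)$ to the splitting field, and part (ii) by the sign action $\sigma(\delta)=\operatorname{sgn}(\pi_\sigma)\,\delta$ on $\delta=\prod_{i<j}(\alpha_i-\alpha_j)$ together with the Galois correspondence. Your explicit attention to the two hypotheses that make the equivalences true --- separability (which gives $\delta\neq 0$ and makes the fixed field exactly $F$) and $\operatorname{char}(F)\neq 2$ (without which $\delta=-\delta$ and the sign cannot be detected) --- is exactly right, and both are satisfied in the paper's application to $n!\,\E_n(x)$, whose discriminant is nonzero.
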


\section{Proof of Theorem \ref{thm1}}

In this section, we provide the proof of Theorem {\ref{thm1}}.

{\it Proof of Theorem {\ref{thm1}}.}

{\rm (i)} Multiplying $n!$ to $f(x)$ gives us the polynomial $F(x)$ given as follows:
\begin{align}\label{eq2}
F(x)=n!f(x)=\sum\limits_{i=0}^{n}c_i\frac{n!}{i!}x^{i}
= x^n+\cdots+c_i\frac{n!}{i!}x^j
+\cdots\pm 2^{k}n!.
\end{align}
Assume that $f(x)$ is reducible over $\Q$. Then $F(x)$ is reducible
over $\Z$. Let $A(x)\in \Z[x]$ be an irreducible monic
factor of $F(x)$ with degree $m$ satisfying $1\le m\le n/2$.
Without loss of generality, we may write
$$A(x)=x^m+a_{m-1}x^{m-1}+\cdots+a_{1}x+a_0.$$

We {\sc claim} that if $a_0$ is divisible by an odd prime $p$,
then $p\le m$. To prove this claim, we let
$\alpha$ be a root of $A(x)$ and define $K=\Q(\alpha)$.
Then we have $[K:\Q]=m$. Since the coefficients of
$A(x)$ are integral, we have $\alpha\in \OO_K$.
It is known by standard algebraic number theory that
the principal ideal $(\alpha)$ in $\OO_K$ has norm
$\abs {N_{K/\Q}(\alpha)}=\abs{a_0}$.
One then derives that $p \mid N_{K/\Q}(\alpha)$
since $p|a_0$. Let $\p\subsetneq \OO_K$ be a prime ideal
lying above $p$. Since $p\mid a_0=\pm N_{K/\Q}(\alpha)$, we have
$$(\alpha)=\p^{d_{\p}}\ha, \textup{~and~} (p)=\p^{e_{\p}}\hb,$$
where $\gcd(\ha\hb,\p)=(1)$, the {\it $\p$-adic valuation}
is an extension of $v_p$ in $K$, and $v_{\p}(p)=e_{\p}$
and $v_{\p}(\alpha)=d_{\p}$. By Proposition 20 at page 24 in \cite{Lang}, we have $d_{\p}\ge1$ and $e_{\p}\ge1$.

Since $\alpha$ is a root of $A(x)$, we have $F(\alpha)=0$.
Then by (\ref{eq2}), we have
\begin{align*}
0= \alpha^{n}+nc_{n-1}\alpha^{n-1}+\cdots+c_j\frac{n!}{j!}\alpha^j+\cdots+n!c_{1}\alpha\pm2^kn!.
\end{align*}
So
\begin{align}\label{eq4}
\mp2^k n!=\alpha^n+nc_{n-1}\alpha^{n-1}+\cdots
+c_j\frac{n!}{j!}\alpha^j+\cdots+n!c_{1}\alpha.
\end{align}
Consider the $\p$-adic valuation of the left-hand
side of (\ref{eq4}). Since $p\ne 2$, we have $v_{\p}(2^k)=0$.
By Lemma \ref{Legendre}, we have
$$v_{\p}(\mp2^k n!)=v_{\p}(n!)=v_{\p}(p)v_{p}(n!)
=e_{\p}v_{p}(n!)=\frac{(n-s_p(n))e_{\p}}{p-1}.$$
Meanwhile, at least one term on the right-hand side of
(\ref{eq4}) has $\p$-adic valuation no more than
$v_{\p}(\mp2^k n!)$. Therefore we obtain that
\begin{align}\label{eq10}
\min_{1\le j\le n-1}\set{v_{\p}(\alpha^n), v_{\p}\Bigg(c_j\frac{n!}{j!}\alpha^j\Bigg)}\le v_{\p}(\mp2^k n!).
\end{align}
One has $v_{\p}(\pm \alpha^n)=nd_{\p}$ and
\begin{align}\label{eq11}
v_{\p}\Big(c_j\frac{n!}{j!}\alpha^j\Big)
=v_{\p}(c_j)+v_{\p}(n!)-v_{\p}(j!)+jd_{\p}.
\end{align}
Since $c_j\in \Z$, one has $v_{\p}(c_j)\ge 0$
for all $1\le j\le n-1$. Noticing that $v_{\p}(2^k)=0$,
it follows from (\ref{eq10}) and (\ref{eq11}) that for some
integer $j$ with $1\le j\le n$, we have
\begin{align}\label{eq5}
v_{\p}(n!)-v_{\p}(j!)+jd_{\p}\le v_{\p}\Big(c_j\frac{n!}{j!}\alpha^j\Big)
\le v_{\p}(\mp2^k n!)=v_{\p}(n!).
\end{align}
Then by Lemma \ref{Legendre} and (\ref{eq5}), we derive that
$$jd_{\p}\le v_{\p}(j!)=e_{\p} v_p(j!)
=\frac{(j-s_p(j))e_{\p}}{p-1}< \frac{je_{\p}}{p-1}.$$
Hence $(p-1)d_{\p}<e_{\p}\le m$ which implies that $p\le m$.
This finishes the proof of the claim.

Now we divide the proof of (i) into following two cases.

{\sc Case 1.} $m=1$. If $a_0$ is divisible
by an odd prime $p$, then the above claim infers
that $p\le m=1$, which is impossible.
Hence $a_0=\pm 2^t$ for $t\ge 0$. So $A(x)=x\pm 2^t$
as required.

{\sc Case 2.} $m\ge 2$. Since $2\le m\le n/2$,
we have $n\ge 4$. Thus $\frac{n!}{(n-m)!}$ is divisible
by some odd prime number. In what follows,
we show that each odd prime divisor
of $\frac{n!}{(n-m)!}$ divides $a_0$. Let $l$ be an odd
prime divisor of $\frac{n!}{(n-m)!}$. Since
$l\mid \frac{n!}{(n-m)!}$ and $c_i\in \Z$, it follows that
$c_i \frac{n!}{i!}\equiv 0 \pmod l$ for all integers
$i$ with $0\le i\le n-m$. We then derive that
$x^{n-m+1}\mid \ang{F(x)}_l$.
Now write $F(x)=A(x)B(x)$. Then
$x^{n-m+1}\mid \ang{A(x)}_l \cdot \ang{B(x)}_l$.
But $\ang{B(x)}_l$ is a polynomial with the
leading coefficient $\pm 1$ and $\deg \ang{B(x)}_l=n-m$.
Thus $x\mid\ang{A(x)}_l$ which is equivalent
to saying that $a_0\equiv 0\pmod l$, namely,
$l$ divides $A(0)=a_0$. Thus each odd prime divisor
of $\frac{n!}{(n-m)!}$ divides $a_0$. But the above claim
tells us that every prime divisor of $a_0$ is no more than $m$.
Hence every odd prime divisor of $\frac{n!}{(n-m)!}$ is no more
than $m$.

On the other hand, since $m\le \frac{n}{2}$, it follows that
$S=\set{n-m+1, ..., n-1, n}$ is a list of $m$ consecutive
integers which are all strictly greater than $m$.
So all the prime factors of each elements in $S$
are no more than $m$. But Lemma \ref{Erdos} applied
to the set $S$ gives us that at least one element in
$S$ is divisible by an odd prime strictly greater than $m$.
We arrive at a contradiction. Thus $f(x)$ contains no
an irreducible factor of degree between 2 and
$\frac{n}{2}$. In conclusion, we know that $f(x)$ is either irreducible over $\Q$, or
$f(x)=g(x)\prod_{i=1}^{n-\deg g(x)}(x\pm 2^{t_i})$ with
$g(x)$ being irreducible and $\frac{n}{2}<\deg g(x)<n$
and all $t_i \ (1\le i\le n-\deg g(x))$ being nonnegative integers.
This completes the proof of the first part of Theorem 1.1.

{\rm (ii)}
If $n\ne 2^s$ for any positive integer $s$, then either $n=1$
or $n$ contains an odd prime divisor. If $n=1$, then $F(x)$
is clearly irreducible. Now let $n$ be divisible by an
odd prime number $q$. Suppose that $F(x)$ is reducible over
$\Z$. Then by Theorem \ref{thm1}(i), $F(x)$ contains
a linear factor of the form $x\pm 2^t$. One may write
$F(x)=(x\pm 2^t)D(x)$, where
$D(x)=x^{n-1}+\sum\limits_{i=0}^{n-2}d_ix^{i}$
and $d_i\in \Z$ for $i=0,1,...,n-2$. For any integer $i$ with
$0\le i \le n-1$, since $q|n$ and $c_i\in \Z$, one has
$c_i\frac{n!}{i!}\equiv 0\pmod q$. Hence
$$\ang{x\pm 2^t}_q\cdot\ang{D(x)}_q
\equiv F(x)\equiv x^n \pmod q.$$
Therefore $x^n|\ang{x\pm 2^t}_q\cdot\ang{D(x)}_q$.
Since the degree of $D(x)$ is $n-1$, we deduce that
$x|\ang{x\pm 2^t}_q$ which is a contradiction
to $q$ is odd. Hence $F(x)$ must be irreducible in this case.

Now let $n=2^s$ and $c_0=\pm 1$. Suppose that $F(x)$
is reducible.
We consider the $2$-adic Newton polygon of $F(x)$.
For any integer $i$ with $1\le i\le n-1$, we have
\begin{align*}
v_2\Big(c_i\frac{2^s!}{i!}\Big)&=v_2(c_i)+v_2(2^s!)-v_2(i!)\\
&=v_2(c_i)+2^s-s_2(2^s)-i+s_2(i)\\
&=v_2(c_i)+2^s-i-1+s_2(i)\\
&\ge 0+2^s-i-1+1=2^s-i.
\end{align*}
The line connecting the two points $(0,2^s-1)$ and $(2^s,0)$
has equation $\frac{x}{2^s}+\frac{y}{2^s-1}=1$ in $xOy$-plane.
Hence for every integer $i$ with $1\le i\le n-1$, the point with
$i$ as its $x$-coordinate holds $y$-coordinate $y_0$ as follows:
$$y_0=(2^s-1)\Big(1-\frac{i}{2^s}\Big)=2^s-i-1+\frac{i}{2^s}
<2^s-i=v_2\Big(c_i\frac{2^s!}{i!}\Big).$$
Thus the point $(i,v_2(c_i\frac{n!}{i!}))$
lies over the segment connecting $(0,2^s-1)$ and $(2^s,0)$.
Therefore the $2$-adic Newton polygon of $F(x)$ is given
by the segment connecting the two vertices $(0,2^s-1),(2^s,0)$.
One immediately derives that the slope of the $2$-adic
Newton polygon of the segment corresponding to the
vertices $(0,2^s-1),(2^s,0)$ is $\frac{1-2^s}{2^s}$
which is in the interval $(-1, 0)$. It infers that the
2-adic valuation of all the zeros of $F(x)$ is in the
interval $(0, 1)$. But $\pm 2^t$ is a zero of $F(x)$.
Hence we have $0<v_2(\pm 2^t)<1$. This is impossible
since $v_2(\pm 2^t)=t$ is a nonnegative integer.
So $F(x)$ is irreducible. The second part of
Theorem \ref{thm1} is proved.

This concludes the proof of Theorem \ref{thm1}.
\hfill$\Box$

\section{Proof of Theorem \ref{thm2}}
In this section, we give the proof of Theorem \ref{thm2}.

{\it Proof of Theorem \ref{thm2}.}

{{\rm (i).}
Since $\E_2(x)=\frac{1}{2}(x+2)^2$ and $\E_4(x)=\frac{1}{24}(x+2)
(x^3+6x^2+12x+24)$, $\E_2(x)$ and $\E_4(x)$ are reducible as one desires.

In the following, we let $n\ne 2,4$. By Theorem \ref{thm1} (ii),
$\E_n(x)$ is irreducible over $\Q$ except when $n$ is a positive
power of 2. So we may let $n=2^s\ge 8$ in what follows. Since
$$v_2\Big(\frac{2}{i!}\Big)=1-i+s_2(i), v_2\Big(\frac{1}{n!}\Big)=2^s-1,$$
by the same argument in the proof of Theorem \ref{thm1} (ii),
the points $\Big(i,v_2\Big(\frac{2}{i!}\Big)\Big)$ lies above the segment
connecting $(0,1),(2^s,1-2^s)$ for each integer $i$ with $1\le i\le n-1$ .
Hence the $2$-adic Newton polygon of $\E_n(x)$ has vertices as follows:
$(0,1),(2^s,1-2^s)$. By Lemma \ref{MTNP} (iii), all roots of
$\E_n(x)$ have 2-adic valuation $-\frac{0-2^s}{1-(1-2^s)}=1$.
Therefore the possible roots of $\E_n(x)$ in $\Q$ are $2$ and $-2$.
One can easily check that $\E_n(2)>0$. By Lemma \ref{emtwo},
$\E_n(-2)>0$ if $n\ge 6$. Hence $2$ and $-2$ are not roots of
$\E_n(x)$ if $n\ge 6$. Thus part (i) is proved.

{\rm (ii).}
If $n\ge 8$, then Lemma \ref{CHEBY} guarantees the existence
of a prime number $q$ with $\frac{n}{2}<q<n-2$. It follows that
the $q$-adic Newton polygon of $\E_n(x)$ has the following three
vertices:
$$(0,0),(q,-1),(n,-1).$$
By the definition of $N_{\E_n}$, one has $q|\N_{\E_n}$.
By Theorem \ref{thm2} (i) and Lemma \ref{HAJIR},
${\rm Gal}_{\Q}(\E_n)\supseteq A_n$ for all integers $n\ge 8$.

Now we deal with the remaining case $1\le n\le 7$. Clearly,
$A_n\subseteq {\rm Gal}_{\Q}(\E_n)$ if $n\in \{1, 2\}$.
Therefore we only need to consider the case $3\le n\le 7$
that will be done case by case in what follows.

{\sc Case 1:} $n=3$. By Eisenstein irreducibility
criterion, $\E_3(x)=\frac{1}{6}(x^3+6x^2+12x+12)$
is irreducible over $\Q$. It is well-known that the Galois group of
an irreducible polynomial of integer coefficients
of degree $3$ is either $A_n$ or $S_n$.
Hence one has $A_3\subseteq {\rm Gal}_{\Q}(\E_3)$.

{\sc Case 2:} $n=4$. Since $$\E_4(x)=\frac{1}{24}(x+2)(x^3+6x^2+12x+24)=\frac{1}{24}(x+2)\E'_4(x),$$
we have ${\rm Gal}_{\Q}(\E_4)={\rm Gal}_{\Q}(\E'_4)$. By Eisenstein irreducibility
criterion, $\E'_4(x)$ is irreducible over $\Q$. So $A_3\subseteq {\rm Gal}_{\Q}(\E'_4)$.
We can compute and get that ${\rm Disc}_{\Q}(\E'_4)=-2^83^3$ which is not square
of a rational integer. Then by Lemma \ref{ANSN}, $A_3\ne{\rm Gal}_{\Q}(\E'_4)$.
It follows from the well-known fact that ${\rm Gal}_{\Q}(\E'_4)\subseteq S_3$,
so we have ${\rm Gal}_{\Q}(\E'_4)=S_3$. Thus Theorem 1.2 is true in this case.

Since ${\rm Gal}_{\Q}(\E_n)={\rm Gal}_{\Q}(n!\E_n)$, from now on we
need only to consider the monic polynomial $n!\E_n(x)$
for the remaining case $n\in \{5,6,7\}$, in which case,
we assert that ${\rm Gal}_{\Q}(n!\E_n)=S_n$.

{\sc Case 3:} $n=5$. Then
$$5!\E_5(x)=x^5+10x^4+40x^3+120x^2+240x+240.$$
By Lemma \ref{DISCR}, one gets that ${\rm Disc}_{\Q}(5!\E_5)=2^{16}3^55^511$.
Considering $5!\E_5(x)$ modulo $17$, one has
$$5!\E_5(x)\equiv f_1(x)f_2(x)\pmod {17},$$
where $f_1(x)=x^2+9x+4$ and $f_2(x)=x^3+x^2+10x+9$ are both
irreducible over the prime field $\F_{17}$. By Lemma \ref{Subgroups}(ii),
${\rm Gal}_{\Q}(5!\E_5)$ contains a transposition.

Now considering $5!\E_5$ modulo $61$, we have
$$5!\E_5(x)\equiv (x+14)(x^4+57x^3+35x^2+57x+52)\pmod {61}.$$
By Lemma \ref{Subgroups}(i), ${\rm Gal}_{\Q}(5!\E_5)$ contains a $4$-cycle. By Lemma \ref{Symmetric}, one derives that
${\rm Gal}_{\Q}(5!\E_5)=S_5$.

{\sc Case 4:} $n=6$. We proceed in the same way as
in Case 3, but we need to choose different primes.
First, we have
$$6!\E_6(x)=x^6+12x^5+60x^4+240x^3+720x^2+1440x+1440.$$
A direct computation using Lemma \ref{DISCR} gives us that
${\rm Disc}_{\Q}(6!\E_6)=2^{30}3^{12}5^{5}7$. Consider $6!\E_6(x)$
modulo $47$, one has
$$6!\E_6(x)\equiv (x+45)f_3(x)f_4(x)\pmod {47},$$
where $f_3(x)=x^2+2x+17$ and $f_4(x)=x^3+12x^2+24$ are both
irreducible over the prime field $\F_{47}$.
By Lemma \ref{Subgroups}(iii), ${\rm Gal}_{\Q}(6!\E_6)$ contains a transposition.

Let us now consider $6!\E_6(x)$ modulo $13$, one has
$$6!\E_6(x)\equiv (x+12)(x^5+8x^3+x^2+6x+3)\pmod {13}.$$
By Lemma \ref{Subgroups}(i), ${\rm Gal}_{\Q}(6!\E_6)$ contains a $5$-cycle.
It follows from Lemma \ref{Symmetric} that ${\rm Gal}_{\Q}(6!\E_6)=S_6$.

{\sc Case 5:} $n=7$. Then
$$7!\E_7(x)=x^7+14x^6+84x^5+420x^4+1680x^3+5040x^2+10080x+10080,$$
and ${\rm Disc}_{\Q}(7!\E_7)=-2^{30}3^{12}5^{7}7^7\times 11\times 79$.
First we consider $7!\E_7(x)$ modulo 13. We have
$$7!\E_7(x)\equiv f_5(x)f_6(x)\pmod {13},$$
where $f_5(x)=x^2+5x+10$ and $f_6(x)=x^5+9x^4+3x^3+3x^2+10x+7$ are both
irreducible over the prime field $\F_{13}$.

By Lemma \ref{Subgroups}(ii), we obtain that ${\rm Gal}_{\Q}(7!\E_7)$ contains a transposition.
Now considering $7!\E_7(x)$ modulo $61$, we have
$$7!\E_7(x)\equiv(x+50)(x^6+25x^5+54x^4+38x^3+24x^2+58x+43)\pmod {61}.$$
Hence by Lemma \ref{Subgroups}(i), ${\rm Gal}_{\Q}(7!\E_7)$ contains a $6$-cycle.
It follows from Lemma \ref{Symmetric} that ${\rm Gal}_{\Q}(7!\E_7)=S_7$.

This completes the proof of Theorem \ref{thm2}.
\hfill$\Box$
}
\section{Proof of Theorem \ref{thm4}}
In this section, we prove Theorem \ref{thm4}. For this purpose,
for any rational numbers $a$ and $b$, we write $a\sim b$ if
$a=bc^2$ for some rational number $c>0$.

{\it Proof of Theorem \ref{thm4}.}
It is a well-known fact that for any polynomial $f(x)$ of degree $n$,
${\rm Gal}_{\Q}(f)\subset S_n$. By Theorem \ref{thm2}, we have $A_n\subset{\rm Gal}_{\Q}(n!\E_n)\subset S_n$ for all $n\ne 4$. By Lemma \ref{ANSN} (ii),
we have ${\rm Gal}_{\Q}(n!\E_n)\subset A_n$ holds if and only if ${\rm Disc}_{\Q}(n!\E_n)$
is a square of a rational number. So in order to show that ${\rm Gal}_{\Q}(n!\E_n)=S_n$
in parts (i) to (iii), it is sufficient to show that ${\rm Disc}_{\Q}(n!\E_n)$ is
not a square of a rational number.
By Lemma \ref{DISCR}, one has
$${\rm Disc}_{\Q}(n!\E_n)=(-1)^{\frac{n(n-1)}{2}} 2^{n-1}(n!)^{n-1}n!\E_n(-n).$$

(i). $n\equiv 3 \pmod 4$. Then $2|(n-1)$ and so
$$(-1)^{\frac{n(n-1)}{2}}2^{n-1}(n!)^{n-1}n!\E_n(-n)\sim -n!\E_n(-n).$$

If $n=3$, then one can compute and obtain that $-3!\E_3(-3)=-3$
which is non-square. Now let $n>3$. Then there
exists a prime number $p$ with $p\equiv 3\pmod 4$ and $p|(n-4)$.
We claim that $-n!\E_n(-n)$ is a quadratic non-residue
modulo $p$. From this claim, one derives that ${\rm Disc}_{\Q}(n!\E_n)$
is non-square. It remains to show the truth of the claim.
We divide its proof into the following two cases.

{\sc Case 1.} $p=3$. Then $3|(n-4)$, and so $n\equiv 1 \pmod 3$. Thus
\begin{align*}
-n!\E_n(-n)&=-\Big((-n)^n+2\sum\limits_{i=0}^{n-1}\frac{n!}{i!}(-n)^i\Big)\\
           &\equiv (-1)\cdot((-n)^n+2n\cdot(-n)^{n-1}) \\
           &=-n^n\equiv -1 \pmod 3.
\end{align*}
Since $-1$ is a quadratic non-residue modulo $3$,
$-n!\E_n(-n)$ is a quadratic non-residue
modulo $p$. The claim is proved in this case.

{\sc Case 2.} $p\ne 3$. Since $n\equiv 4 \pmod p$, one has
$\frac{n!}{i!}(-n)^i\equiv 0\pmod p$ for all integers $i$
with $0\le i\le n-5$ which implies that
\begin{align*}
-n!\E_n(-n)=& -\Big((-n)^n+2\sum\limits_{i=0}^{n-1}\frac{n!}{i!}(-n)^i\Big)\\
\equiv & -\Big((-n)^n+2\frac{n!}{(n-1)!}(-n)^{n-1}+2\frac{n!}{(n-2)!}(-n)^{n-2}\\
&+2\frac{n!}{(n-3)!}(-n)^{n-3}+2\frac{n!}{(n-4)!}(-n)^{n-4}\Big)\\
\equiv &-(-4^n+2\cdot4^n-6\cdot4^{n-1}+3\cdot4^{n-1}-3\cdot4^{n-2})\\
=&-2^{2n-4}\pmod p.
\end{align*}
Since $p\equiv 3 \pmod 4$, one calculates that
$$\left(\frac{-n!\E_n(-n)}{p}\right)
=\left(\frac{-2^{2n-4}}{p}\right)
=\left(\frac{-1}{p}\right)=-1.$$
Thus $-n!\E_n(-n)$ is a quadratic non-residue
modulo $p$. Hence the claim holds in this case.

(ii). Since $n$ is even, it follows that
$${\rm Disc}_{\Q}(n!\E_n)\sim (-1)^{\frac{n(n-1)}{2}}2\E_n(-n).$$
Let $p$ be a prime divisor of $n-1$ such that $v_p(n!)$ is odd.
One has
$$\E_n(-n)=\frac{(-n)^n}{n!}+2\sum\limits_{i=0}^{n-1}\frac{(-n)^i}{i!}
=\frac{-n^{n-1}}{(n-1)!}+2\sum\limits_{i=0}^{n-2}\frac{(-n)^i}{i!}.$$
Since $n$ is even and $p|(n-1)$, one has $p\ne 2$ and $p\nmid n$.
Thus $v_p\big(2\frac{(-n)^i}{i!}\big)=-v_p(i!)$ and
$$v_p\Big(\frac{-n^{n-1}}{(n-1)!}\Big)=-v_p((n-1)!)<-v_p(i!)$$
for all integers $i\in\{0,1,...,n-2\}$. Therefore by
the isosceles triangle principle (see, for instance, \cite{Koblitz})
and noticing that $p\nmid n$, we deduce that
\begin{align*}
v_p((-1)^{\frac{n(n-1)}{2}}2\E_n(-n))&=v_p(\E_n(-n))\\
&=v_p\Big(\frac{-n^{n-1}}{(n-1)!}
+2\sum\limits_{i=0}^{n-2}\frac{(-n)^i}{i!}\Big)\\
&=-v_p((n-1)!)\\
&=-v_p(n!).
\end{align*}
Thus $v_p((-1)^{\frac{n(n-1)}{2}}2\E_n(-n))$ is odd since
$v_p(n!)$ is odd. Hence ${\rm Disc}_{\Q}(n!\E_n)$ is not a
square of a rational number in this case.
This completes the proof of case 2.

(iii). Since $n\equiv 1\pmod 4$, it follows that
$${\rm Disc}_{\Q}(n!\E_n)\sim n!\E_n(-n).$$
Since $n\equiv 1\pmod 4$, we have
\begin{align}\label{eq6}
n!\E_n(-n)&=(-n)^n+2\frac{(-n)^{n-1}n!}{(n-1)!}
+2\frac{(-n)^{n-2}n!}{(n-2)!}+2\frac{(-n)^{n-3}n!}{(n-3)!}
+2\sum\limits_{i=0}^{n-4}\frac{(-n)^{i}n!}{i!}\nonumber\\
&=(n-2)^2n^{n-2}+2(n-2)K,
\end{align}
where
$$K:=\frac{1}{n-2}\sum\limits_{i=0}^{n-4}\frac{(-n)^{i}n!}{i!}$$
is a positive integer. Thus it implies that $(n-2)|n!\E_n(-n)$.
By the assumption, $n-2$ equals the product of an odd prime number
$p$ and a positive integer coprime to $p$ with $p$ satisfying (\ref{cond}).
So one may write $n=tp+2$, where
$\gcd(p, t)=1$. So $p|n!\E_n(-n)$. Thus $v_p(n!\E_n(-n))\ge 1$.

In what follows, we show that $\frac{n!\E_n(-n)}{p}\not\equiv 0\pmod p$.
It is clear that for all nonnegative integers $i$ with $i<n-2-p$, one has
$p^2|\frac{(-n)^in!}{i!}$. Also $(n-2)^2n^{n-2}\equiv 0\pmod {p^2}$ holds.
Noticing that $n\equiv 2\pmod p$, it then follows from (5.1) that
\begin{align}\label{eq7}
\frac{n!\E_n(-n)}{p} & \equiv \frac{2}{p}\sum\limits_{i=n-2-p}^{n-4}
\frac{(-n)^{i}n!}{i!} \nonumber\\
&\equiv \frac{2n(n-1)(n-2)}{p}\sum\limits_{i=n-2-p}^{n-4}
\frac{(-n)^i(n-3)!}{i!} \nonumber\\
&\equiv t\sum\limits_{i=n-2-p}^{n-4}(-1)^{i}2^{i+2}\frac{(n-3)!}{i!} \pmod p.
\end{align}

In (\ref{eq7}), we let $i=tp-p+j$, where $j$ runs over $0$ to $p-2$.
By Fermat's little theorem, one has $2^{p}\equiv 2 \pmod p$. Thus
\begin{align}\label{eq8}
2^{i+2}\equiv 2^{tp-p+j+2}\equiv 2^{t-1+j+2}=2^{t+1}2^{j}\pmod p.
\end{align}
Meanwhile, we have
\begin{align}\label{eq9}
\frac{(n-3)!}{i!}=& (n-3)(n-4)...(i+1) \nonumber\\
=& (tp-1)(tp-2)...(tp-p+j+1) \nonumber\\
\equiv &(-1)^{p-j-1}(p-j-1)! \pmod p.
\end{align}
Since $i+p-j-1=tp-1$, putting (\ref{eq8}) and (\ref{eq9}) into
(\ref{eq7}) gives us that
\begin{align}\label{eq10'}
\frac{n!\E_n(-n)}{p}\equiv & t\sum_{i=tp-p}^{tp-2}(-1)^{i+p-j-1}2^{t+1} 2^j(p-j-1)! \nonumber \\
\equiv & (-1)^{tp-1} 2^{t+1}t\sum_{j=0}^{p-2}2^j  (p-j-1)! \nonumber\\
\equiv & (-1)^{tp-1} 2^{t+1}t\sum\limits_{j=1}^{p-1}2^{p-1-j}j!\pmod p.
\end{align}
By (\ref{cond}), one has $p\nmid\sum\limits_{i=1}^{p-1}2^{p-1-i}i!$.
Since $\gcd(t,p)=1$ and $p$ is odd, it then follows from
(\ref{eq10'}) that
$$\frac{n!\E_n(-n)}{p}\not \equiv 0 \pmod p.$$
Thus $v_p(n!\E_n(-n))<2$. But we already have proved that $v_p(n!\E_n(-n))\ge 1$.
Hence $v_p(n!\E_n(-n))=1$. It infers that $n!\E_n(-n)$ is
nonsquare, and so is ${\rm Disc}_{\Q}(n!\E_n)$.
So part (iii) is proved.

This finishes the proof of Theorem \ref{thm4}. \hfill$\Box$
\bibliographystyle{amsplain}

\end{document}